\theoremstyle{plain}
\newtheorem{theorem}[equation]{Theorem}
\newtheorem{proposition}[equation]{Proposition}
\newtheorem{lemma}[equation]{Lemma}
\newtheorem{corollary}[equation]{Corollary}
\newtheorem{definition}[equation]{Definition}
\theoremstyle{remark}
\newtheorem{remark}[equation]{Remark}
\numberwithin{equation}{section}
\def\norm#1{\Vert#1\Vert}
\newcommand{\ca}{{\mathcal A}}
\newcommand{\cg}{{\mathcal G}}
\newcommand{\ci}{{\mathcal I}}
\newcommand{\ck}{{\mathcal K}}
\newcommand{\co}{{\mathcal O}}
\newcommand{\cR}{{\mathcal R}}
\newcommand{\cs}{{\mathcal S}}
\newcommand{\B}{{\mathbb B}}
\newcommand{\C}{{\mathbb C}}
\newcommand{\h}{{\mathbb H}}
\newcommand{\Q}{{\mathbb Q}}
\newcommand{\R}{{\mathbb R}}
\newcommand{\Z}{{\mathbb Z}}
\begin{document}

\title[Bergman subspaces]{Bergman subspaces and subkernels: degenerate $L^p$ mapping and zeroes}
\author{L. D. Edholm  \& J. D. McNeal}
\subjclass[2010]{32W05}
\begin{abstract}
Regularity and irregularity of the Bergman projection on $L^p$ spaces is established on a natural family of bounded, pseudoconvex domains. The family is parameterized by a real variable $\gamma$. 
A surprising consequence of the analysis is that, whenever $\gamma$ is irrational, the Bergman projection is bounded only for $p=2$.
 \end{abstract}
\thanks{Research of the second author was partially supported by a National Science Foundation grant.}
\address{Department of Mathematics, \newline University of Michigan, Ann Arbor, Michigan, USA}
\email{edholm@umich.edu}
\address{Department of Mathematics, \newline The Ohio State University, Columbus, Ohio, USA}
\email{mcneal@math.ohio-state.edu}

\maketitle 


\section*{Introduction}\label{S:intro}

For $\gamma > 0$, define the domain 

\begin{equation}\label{D:genHartogs}
\h_{\gamma} := \left\{(z_1, z_2) \in \C^2: |z_1|^{\gamma} < |z_2| < 1 \right\},
\end{equation}
and call $\h_{\gamma}$ the {\it power-generalized Hartogs triangle of exponent $\gamma$}. The domain $\h_1$ is the classical Hartogs triangle.

The primary purpose of this paper is to show that the Bergman projection of $\h_\gamma$, $\mathbf{B}_{\gamma}$, is $L^p$ bounded for only a restricted range of those $p\in(1,\infty)$. Our goal is to directly relate the $L^p$ boundedness to the exponent $\gamma$, and explain how this restricted range is tied to the boundary singularity at $(0,0)$.

The results here extend \cite{EdhMcN16},
which dealt with the special case of $\gamma \in\Z^+$. For general $\gamma$, the $L^p$ boundedness of $\mathbf{B}_{\gamma}$ turns out to be fundamentally different depending on whether
$\gamma\in\Q$ or not. The fact that this arithmetical property of $\gamma$ effects mapping properties of $\mathbf{B}_{\gamma}$ was surprising, and motivated the writing of this paper.

When $\gamma\in\Q$, $\mathbf{B}_{\gamma}$ is $L^p$ bounded for a non-degenerate interval of $p$ about 2:

\begin{theorem}\label{T:MainLpRational}
Let $m,n \in \Z^+$ with $\gcd (m,n) = 1$.  The Bergman projection $\mathbf{B}_{m/n}$ is a bounded operator from $L^p(\mathbb{H}_{m/n})$ to $A^p(\mathbb{H}_{m/n})$ if and only if $p \in \left(\frac{2m+2n}{m+n+1}, \frac{2m+2n}{m+n-1}\right)$.
\end{theorem}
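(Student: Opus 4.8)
The plan is to compute the Bergman kernel of $\h_{m/n}$ explicitly and then analyze the $L^p$-mapping of the associated integral operator.

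The plan is to compute the Bergman kernel of $\h_{m/n}$ explicitly and then analyze the induced integral operator. Since $\h_{m/n}$ is invariant under the torus action $(z_1,z_2)\mapsto(e^{i\theta_1}z_1,e^{i\theta_2}z_2)$, the monomials $z_1^a z_2^b$ lying in $A^2(\h_{m/n})$ form a complete orthogonal system. First I would determine the index set $S\subset\Z^2$ of admissible $(a,b)$ and compute the squared norms by integrating in polar coordinates. Writing $|z_1|^{m/n}<|z_2|<1$, a direct calculation shows that $z_1^az_2^b\in A^2(\h_{m/n})$ precisely when $a\ge 0$ and $n(a+1)+m(b+1)>0$, with
\begin{equation*}
\|z_1^az_2^b\|^2=\frac{m\pi^2}{(a+1)\bigl(n(a+1)+m(b+1)\bigr)}.
\end{equation*}
Summing $\sum_{(a,b)\in S}\|z_1^az_2^b\|^{-2}\,z_1^a\bar w_1^a z_2^b\bar w_2^b$ then yields a closed form for $\mathbf{B}_{m/n}(z,w)$.

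The decisive point, and the place where $\gamma=m/n\in\Q$ enters, is the summation in the previous step. For fixed $a$ the lower bound on $b$ is $b>-n(a+1)/m$, and replacing $a$ by $a+m$ shifts this bound by exactly the integer $-n$. This exact periodicity of the lattice $S$, a consequence of $\gcd(m,n)=1$, lets me group the double sum into finitely many geometric-type subseries and collapse it to a rational function of $z_1\bar w_1$ and $z_2\bar w_2$, whose denominator carries factors that vanish as $(z,w)\to(0,0)$. I expect this to be the main obstacle: extracting a genuinely closed, manageable form for the kernel and, from it, sharp two-sided size estimates for the singularity at the distinguished boundary point $(0,0)$, since all of the $L^p$ behavior is governed there.

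With the kernel in hand I would establish $L^p$ boundedness for $p$ in the stated open interval by Schur's test, using auxiliary weights that are powers of $|z_2|$ and of the defining quantity $|z_2|^n-|z_1|^m$. The admissible Schur exponents are constrained by an integrability threshold for the kernel's singularity, and optimizing this threshold should reproduce exactly the endpoints $\tfrac{2(m+n)}{m+n\pm 1}$, the $\pm 1$ reflecting the borderline integrability at the singular point. For the reverse implication I would test $\mathbf{B}_{m/n}$ on an explicit one-parameter family, such as functions of the form $|z_2|^{s}$ times a cutoff, or suitably truncated pieces of the kernel, and show that the image fails to lie in $L^p$ once $p$ reaches or passes either endpoint, establishing sharpness. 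The specialization $n=1$ (so $\gamma\in\Z^+$) recovers the interval of \cite{EdhMcN16}, and the case $m=n=1$ recovers the classical $(4/3,4)$ window for the Hartogs triangle, both serving as consistency checks.
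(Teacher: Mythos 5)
Your sufficiency half is essentially the paper's own argument: the admissibility condition $n(a+1)+m(b+1)>0$ and the norm formula you state are exactly Lemma \ref{L:gammaAllowableMultiIndex}, grouping the Laurent sum by the residue of $a$ modulo $m$ is precisely the paper's decomposition $A^2(\h_{m/n})=\cs_0\oplus\cdots\oplus\cs_{m-1}$ into $m$ subkernels summed in closed rational form (Theorem \ref{T:subKernelCalculation}), and the positive range is then obtained by a Schur-type test. Two cautions there: your auxiliary weight must also carry the factor $(1-|z_2|^2)$, since the kernel has the singular factor $|1-z_2\bar w_2|^{-2}$ at the outer boundary (the paper uses $h(z)=\left(|z_2|^{2n}-|z_1|^{2m}\right)(1-|z_2|^2)$); and the stated endpoints do not come from a single estimate but from the \emph{worst} residue class: each subkernel $K_j$ has its own interval, and the minimal one is attained at the class $j_0$ with $n(j_0+1)\equiv 1\pmod m$, which is where $\gcd(m,n)=1$ genuinely enters. (Also, your lower bound on $b$ should read $b+1>-n(a+1)/m$, a harmless slip.)

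The necessity half has a genuine gap. Your first proposed test family fails outright: any rotation-invariant function such as $|z_2|^s$ times a radial cutoff is orthogonal to every nonconstant monomial, so its Bergman projection is a constant and lies in every $L^p(\h_{m/n})$; it can detect nothing. The truncated-kernel idea is left without a mechanism, and what it omits is the crux of sharpness: the congruence $n(j_0+1)\equiv 1 \pmod m$ produces a lattice point $(j_0,\ell(j_0))$, $0\le j_0\le m-1$, lying \emph{exactly on} the critical line $n\alpha_1+m\alpha_2=1-m-n$, and the bounded function $f(z)=z_1^{j_0}\bar z_2^{-\ell(j_0)}$ satisfies, by orthogonality alone, $\mathbf{B}_{m/n}f=C\,z_1^{j_0}z_2^{\ell(j_0)}$, a monomial belonging to $L^p$ precisely for $p<\frac{2m+2n}{m+n-1}$; this is how the paper (Propositions \ref{P:EasyMonomials2} and \ref{L:NonLpBoundednessRational}) gets failure at and beyond the upper endpoint. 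Finally, your plan to show failure once $p$ ``reaches or passes either endpoint'' by testing cannot work at the lower endpoint: on a bounded domain $\mathbf{B}_{m/n}(L^\infty)\subset A^2\subset A^p$ for all $p<2$, so no bounded test function witnesses unboundedness there. The paper instead transfers the failure from $q\ge\frac{2m+2n}{m+n-1}$ to $p\le\frac{2m+2n}{m+n+1}$ via self-adjointness and duality (Lemma \ref{L:symmetricLp}); some such duality step must be added to your argument.
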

\noindent However when $\gamma\notin\Q$, the $L^p$ mapping of $\mathbf{B}_{\gamma}$ completely degenerates:

\begin{theorem}\label{T:MainLpIrrational}
Let $\gamma > 0$ be irrational.  The Bergman projection $\mathbf{B}_{\gamma}$ is a bounded operator from $L^p(\mathbb{H}_{\gamma})$ to $A^p(\mathbb{H}_{\gamma})$ if and only if $p = 2$.

\end{theorem}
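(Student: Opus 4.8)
The plan is to reduce the two-dimensional problem to a one-parameter family of weighted Bergman projections on the disk, and then to show that irrationality of $\gamma$ forces one of these to be unbounded for every $p\neq 2$. First I would record the monomial orthogonal basis of $A^2(\h_\gamma)$: since $\h_\gamma$ contains the slice $\{z_1=0\}$ but never meets $\{z_2=0\}$, the basis consists of $z_1^jz_2^k$ with $j\in\N$, $k\in\Z$ and $(j+1)+\gamma(k+1)>0$. Using that $\h_\gamma$ is Reinhardt and fibers over the punctured $z_2$-disk, integrating $|z_1|^{2j}$ over the fiber $\{|z_1|<|z_2|^{1/\gamma}\}$ shows that for $f(z)=z_1^jh(z_2)$ one has $\mathbf{B}_\gamma f=z_1^j\,P_{\beta_j}h$, where $P_{\beta_j}$ is the weighted Bergman projection on $\D$ with weight $|z_2|^{2\beta_j}$ and $\beta_j=(j+1)/\gamma$. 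Thus $\mathbf{B}_\gamma$ is block-diagonal over the $z_1$-degrees $j$, and its $L^p$ operator norm dominates that of each block measured in the fiber norm.

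The arithmetic of $\gamma$ enters through $\theta_j:=\{(j+1)/\gamma\}\in(0,1)$, which is well defined and nonzero because $\gamma\notin\Q$; it is exactly the smallest value of $(j+1)/\gamma+(k+1)$ over admissible $k$, so the extremal monomial in degree $j$ is $\phi_j:=z_1^jz_2^{k_0(j)}$ with $k_0(j)=-\lceil(j+1)/\gamma\rceil$. I would then test $\mathbf{B}_\gamma$ on the explicit functions $f_{j,\rho}=z_1^jz_2^{k_0(j)}\mathbf 1_{\{|z_2|<\rho\}}$, supported in $\h_\gamma$. A direct computation of the fiber integrals gives $\mathbf{B}_\gamma f_{j,\rho}=\rho^{2\theta_j}\phi_j$, and, writing $E_q:=(2-q)(1+1/\gamma)+q\theta_j$ for the exponent governing $L^q$-integrability at the singular point $(0,0)$ (so $E_q>0$ whenever $q<2$), the norms collapse to
\[
\frac{\|\mathbf{B}_\gamma f_{j,\rho}\|_{L^q(\h_\gamma)}}{\|f_{j,\rho}\|_{L^q(\h_\gamma)}}=\rho^{\,2\theta_j-E_q/q}.
\]

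Fix $q\in(1,2)$. Then $E_q/q-2\theta_j=\tfrac1q\big[(2-q)(1+1/\gamma)-q\theta_j\big]$, which is strictly positive as soon as $\theta_j<(2-q)(1+1/\gamma)/q$. Since $\gamma$ is irrational, the sequence $\{(j+1)/\gamma\}$ is dense in $[0,1)$, so such $j$ exist; for any one of them the displayed ratio tends to $\infty$ as $\rho\to0^+$, forcing $\|\mathbf{B}_\gamma\|_{L^q\to L^q}=\infty$. Because $\mathbf{B}_\gamma$ has Hermitian kernel, boundedness on $L^p$ is equivalent to boundedness on $L^{p'}$; the case $p>2$ therefore follows from the case $p'\in(1,2)$ just treated, and together with the trivial $L^2$ bound this yields the theorem.

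The main obstacle I anticipate is not the test computation but justifying the reduction cleanly: one must verify that $\mathbf{B}_\gamma$ genuinely preserves each $z_1$-degree and acts there as the stated weighted disk projection (so the block estimate is lossless), and confirm that $E_q$ is the correct exponent controlling both $\|\phi_j\|_{L^q}$ and the truncated norm $\|f_{j,\rho}\|_{L^q}$, so that the powers of $\rho$ combine exactly into $2\theta_j-E_q/q$. I would also cross-check the mechanism against Theorem~\ref{T:MainLpRational}: for $\gamma=m/n$ the values $\theta_j$ lie in $\{1/m,\dots,(m-1)/m,1\}$ with least positive value $1/m$, and requiring $\theta_j\ge(2-q)(1+1/\gamma)/q$ for all $j$ reproduces precisely the endpoints $\tfrac{2m+2n}{m+n\pm1}$, strong evidence that the exponent $E_q$ and the extremal construction are correct.
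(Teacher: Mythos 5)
Your proof is correct, and it takes a genuinely different route from the paper's. The paper fixes $p>2$ and exhibits a single \emph{bounded} function $z_1^{\beta_1}\bar z_2^{-\beta_2}$ whose projection $z_1^{\beta_1}z_2^{\beta_2}$ fails to lie in $L^p(\h_\gamma)$; there the borderline index $\beta$ is imported from a rational approximant $\h_{m_j/n_j}$, so Dirichlet's theorem with its quadratic rate $|n_j/m_j-1/\gamma|<1/m_j^2$ is needed twice --- once to verify that $\beta$ is $\gamma$-allowable at all, and once to absorb the error term $\beta_1\left(\frac{n_j}{m_j}-\frac1\gamma\right)$ uniformly over $0\le\beta_1\le m_j-1$ --- after which $1<p<2$ follows from Lemma \ref{L:symmetricLp}. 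You attack the opposite range: you fix $q\in(1,2)$, use the extremal monomials $\phi_j=z_1^jz_2^{k_0(j)}$, which are defined \emph{intrinsically} for $\gamma$ (no allowability-transfer step is needed, since $\theta_j=\{(j+1)/\gamma\}>0$ puts $\phi_j\in A^2(\h_\gamma)$ directly), and get operator-norm blow-up from the truncations $f_{j,\rho}$ as $\rho\to0$. I checked your computations: Reinhardt orthogonality kills all but the $(j,k_0(j))$ term of the kernel expansion, giving exactly $\mathbf{B}_\gamma f_{j,\rho}=\rho^{2\theta_j}\phi_j$; your $E_q=(2-q)(1+1/\gamma)+q\theta_j$ is indeed the exponent with $\|f_{j,\rho}\|_{L^q(\h_\gamma)}^q\approx\rho^{E_q}$; and the ratio is $\rho^{2\theta_j-E_q/q}$ as claimed, which blows up once $\theta_j<(2-q)(1+1/\gamma)/q$. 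Your arithmetic input is correspondingly weaker than the paper's: you need only $\liminf_j\theta_j=0$, i.e., density of $\{n/\gamma\}$ modulo one (Kronecker's theorem), with no approximation rate. Each approach buys something: the paper's yields the stronger conclusion that $\mathbf{B}_\gamma$ fails even to map $L^\infty(\h_\gamma)$ into $L^p(\h_\gamma)$ for every $p>2$, while yours gives a quantitative blow-up rate localized at the singularity $(0,0)$, plus a parameter $\theta_j$ that transparently interpolates the rational case --- your cross-check (least positive value $1/m$ when $\gamma=m/n$, recovering the endpoints $\frac{2m+2n}{m+n\pm1}$) is precisely the content of \eqref{E:specialE}. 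One cosmetic correction: the fiber operator in your reduction is the weighted Bergman projection on the \emph{punctured} disk, since the fiber space contains the Laurent monomials $z_2^k$, $k_0(j)\le k<0$, and the honest weighted projection on $\D$ would miss exactly your extremal monomials; but your executed argument never actually uses the block reduction, only the one-term orthogonality computation, so nothing is affected.
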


A secondary purpose of this paper is to show the Bergman kernel of $\h_\gamma$ has zeroes for all $\gamma\geq 2$. This extends a theorem in \cite{Edh16} for the cases $\gamma\in \Z^+$, $\gamma\ge2$. It was also shown in \cite{Edh16} that the Bergman kernel of $\h_{1/k}$, $k\in\Z^+$, does not have zeroes, i.e. $\h_{1/k}$ is a Lu Qi-Keng domain in the terminology of \cite{Boas96}. The complete answer to the question of when the Bergman kernels associated to $\h_\gamma$ have zeroes is thus reduced to the case in which $1<\gamma <2$, but this remains unsolved. See Remark \ref{R:LuQiKeng}.

The Bergman kernel of $\h_\gamma$ is computed in Section \ref{S:Decomp} by summing an orthonormal basis for $A^2\left(\h_\gamma\right)$. When $\gamma=\frac mn\in\Q^+$, the summation occurs by grouping together monomials based on their exponent's distance to a certain critical line in the lattice $\Z^2$. The geometric representation of this we call the lattice point diagram of $\h_\gamma$ and is described in Section \ref{S:Decomp}. This leads to $m$ sub-Bergman kernels

\begin{equation}\label{E:bergman_decomp*}
\B_{m/n}(z,w) = K_0(z,w) \oplus K_1(z,w) \oplus \cdots \oplus K_{m-1}(z,w),
\end{equation}
where $\B_{m/n}$ is the full Bergman kernel of $\h_\gamma$, and to explicit formulas for each subkernel $K_j$. It follows from these formulas that $\B_{m/n}(z,w)$ is a rational function of 
$(z,w)\in \h_\gamma\times\h_\gamma$. It is intriguing that the denominators of the sub-Bergman kernels $K_j$ are identical for $0\leq j\leq m-1$. In any case, once the expressions for
$K_j$ are in hand, the $L^p$ boundedness range of their associated operators, $\ck_j$, are proved following the methods used in \cite{EdhMcN16}.  Theorem \ref{T:MainLpRational} follows by taking the smallest range amongst all $\ck_j$, $0\leq j\leq m-1$.

The method used in \cite{Edh16} to compute $\B_{k}(z,w)$, $k\in\Z^+$, was different: there the first author used Bell's transformation formula for the Bergman kernel under proper maps \cite{Bell82c} and fairly elaborate algebraic manipulations to compute the Bergman kernel of $\h_k$. The difficulty in executing these algebraic arguments for $\gamma\notin\Z^+$,  the existence of the kernel decomposition \eqref{E:bergman_decomp*}, and the power of the lattice point diagram generally recommended the method used in Section  \ref{S:Decomp}.

When $\gamma\notin\Q$, the Bergman kernel of $\h_\gamma$ is not a rational function, but we obtain an explicit enough formula to do further analysis, in particular to determine the existence of zeroes (Theorem \ref{T:LQKgamma}) and to prove Theorem \ref{T:MainLpIrrational}.

There are two additional points about methods that seem noteworthy. First, some basic facts from number theory are used at multiple points in the arguments. Congruence of integers and residue systems occur in the proof of Theorem \ref{T:subKernelCalculation}, Proposition \ref{P:boundednessSubprojections}, and Proposition \ref{L:NonLpBoundednessRational}, while Dirichlet's theorem on rational approximation of $\gamma\notin\Q$ is used in Section \ref{S:Irrational}. Though these facts are elementary, they also seem intrinsically connected our results. For example, the fact that Dirichlet's theorem gives a quadratic estimate (in the denominator) between $\gamma\notin\Q$ and $\frac mn$ is crucial for our proof of Theorem \ref{T:MainLpIrrational} in Section \ref{S:Irrational}. Second, care is taken in subsection \ref{SS:TypeA} to identify the size estimates of a general kernel on $\h_{m/n}$ needed to conclude $L^p$ boundedness of its operator. The exponent $A$ in that subsection is the essential parameter. Other natural kernels on $\h_{m/n}$, e.g. the Szeg\" o kernel as well as non-holomorphic kernels, can thus be analyzed via subsection \ref{SS:TypeA}.

There are antecedents to Theorem \ref{T:MainLpRational}, besides  \cite{EdhMcN16}.  Note that the domains $\h_{\gamma}$ are pseudoconvex, but the boundary of $\h_{\gamma}$, $b\h_{\gamma}$, is not smooth. The serious singularity is at $(0,0)$, near which $b\h_{\gamma}$ is not the graph of a continuous function; points of the form $\left(e^{ia}, e^{ib}\right), a,b\in\R$ are also non-smooth, but of a milder, polydisc-like type. Lanzani and Stein \cite{LanSte04} studied different classes of domains $\Omega\subset\C$, classified by severity of non-smoothness of the boundaries. Limited $L^p$ boundedness of $\mathbf{B}_{\Omega}$, analogous to Theorem \ref{T:MainLpRational}, is shown for certain classes. Krantz and Peloso \cite{KraPel} showed that the Bergman projection on non-smooth versions of the worm domain has limited $L^p$ boundedness. In \cite{ChaZey16}, Chakrabarti and Zeytuncu proved the result corresponding to Theorem \ref{T:MainLpRational} for $\h_1$, with a different proof than in \cite{EdhMcN16}. In \cite{Chen14}, Chen considers a different generalization of the Hartogs triangle than our $\h_{\gamma}$ and establishes limited $L^p$ boundedness in that situation. Perhaps the most significant overlap with our work is \cite{ZeyThesis} and \cite{Zey13}. Zeytuncu constructs particular non-smooth Hartogs domains, some exhibiting the limited range of $L^p$ boundedness of the type in Theorem \ref{T:MainLpRational} and others with the degeneracy of Theorem \ref{T:MainLpIrrational}. However the differences between our results and \cite{ZeyThesis, Zey13} are also significant. Zeytuncu's degenerate $L^p$ boundedness stems from his domains having exponential cusps at their boundary: see \cite{Zey13}, Theorem 1.2, for the essential, weighted one-variable result (which is lifted to $\C^2$ in the usual way to give an unweighted result). Our domains $\h_\gamma$, on the other hand, have only a polynomial like singularity at $(0,0)$.  And the degenerate $L^p$ boundedness in Theorem \ref{T:MainLpIrrational} comes from the fact that $\gamma$ is not rational, rather than exponential vanishing at the boundary. A result that encompasses both our Theorem \ref{T:MainLpIrrational} and Zeytuncu's Theorem 1.2 in \cite{Zey13} is lacking, but would be very interesting.

For many classes of pseudoconvex domains with smooth boundary it is known that the Bergman projection maps $L^p$ boundedly for all $p\in(1,\infty)$, see \cite{PhoSte77, McNeal89, McNeal94b, McNSte94, NagRosSteWai89, LanSte12} and their references for the principal results. But recently, restricted $L^p$ boundedness similar to Theorem \ref{T:MainLpRational} has also been shown on smoothly bounded worm domains, \cite{BarSah12}. Versions of these domains  were originally defined in \cite{DieFor77-1}. We also mention an earlier result of Barrett, \cite{Bar84}, of a smoothly bounded {\it non-pseudoconvex} domain whose Bergman projection has a restricted range of $L^p$ boundedness.

\bigskip


\section{Notation}\label{S:prelim}

If $\Omega\subset\C^n$ is a domain, let $\co(\Omega)$ denote the holomorphic functions on $\Omega$. The standard $L^2$ inner product is denoted

\begin{equation}\label{D:innerProduct}
\left\langle f,g\right\rangle =\int_\Omega f\cdot \bar g\, dV,
\end{equation}
where $dV$ denotes Lebesgue measure on $\C^n$. For $p>0$, 
$$L^p(\Omega) =\left\{f\, :\,\,\left(\int_\Omega |f|^p\, dV\right)^{\frac 1p}:= \|f\|_p <\infty\right\}$$  denotes the usual Lebesgue space of $p$-th power integrable functions. When $p=2$ we drop the subscript on the norm, i.e. $\|f\|^2= \langle f,f\rangle$. The Bergman spaces are denoted $A^p(\Omega)=\co(\Omega)\cap L^p(\Omega)$.

The Bergman projection $\mathbf{B}_\Omega :L^2(\Omega)\longrightarrow A^2(\Omega)$ is the orthogonal projection operator.
It is elementary that this operator is self-adjoint with respect to the inner product \eqref{D:innerProduct}. The Bergman kernel, denoted $\B_\Omega(z,w)$, satisfies

\begin{equation*}
\mathbf{B}_\Omega f(z)=\int_\Omega \B_\Omega(z,w) f(w)\, dV(w),\qquad f\in L^2(\Omega).
\end{equation*}
Given an orthonormal Hilbert space basis \{$\phi_{\alpha}\}_{\alpha \in \ca}$ for $A^2(\Omega)$, the Bergman kernel is given by the following formula, 

\begin{equation}\label{E:BergmanInfiniteSum}
\B_\Omega(z,w) = \sum_{\alpha \in \ca}\phi_{\alpha}(z)\overline{\phi_{\alpha}(w)}.
\end{equation}

Recall that $\Omega \subset \C^n$ is a Reinhardt domain if for every $z = (z_1, z_2, \cdots, z_n) \in \Omega$, it also holds that $(e^{i \theta_1} z_1, e^{i \theta_2} z_2, \cdots, e^{i \theta_n} z_n) \in \Omega$, where $\theta_1, \theta_2, \cdots, \theta_n$ are arbitrary real numbers.  See \cite{JarPflBook08} for a detailed treatment of analysis on these domains.  Given a Reinhardt domain $\Omega$, define the {\it Reinhardt shadow} of $\Omega$ to be the set 
\[
\omega := \{(r_1, r_2,\cdots, r_n) \in \R^n : r_j \ge 0, (r_1, r_2,\cdots, r_n) \in \Omega \}.
\]

The power-generalized Hartogs triangles \eqref{D:genHartogs} are clearly Reinhardt domains. For these domains, $\mathbf{B}_\gamma$ and $\B_\gamma(z,w)$ will denote
$\mathbf{B}_{\h_\gamma}$ and $\B_{\h_\gamma}(z,w)$ respectively. As usual, the operator $\mathbf{B}_{\gamma}$ is extended to supersets of $L^2\left(\h_\gamma\right)$ by setting

\begin{equation*}
\mathbf{B}_\gamma f(z)=\int_{\h_\gamma} \B_\gamma (z,w) f(w)\, dV(w),
\end{equation*}
whenever the integral is defined. We still refer to $\mathbf{B}_\gamma$ as the Bergman projection, even when acting on $L^p\left(\h_\gamma\right)$ for $p \in (1,2)$.

Two pieces of notational shorthand will also be used. If $D$ and $E$ are functions depending on several variables, $D\lesssim E$ will signify that there exists a 
constant $K>0$, independent of relevant variables, such that $D\leq K\cdot E$. The independence of which variables will be specified (or clear) in context.
Also, $D\approx E$ stands for $D\lesssim E\lesssim D$.
 If $x\in\R$, $\lfloor x\rfloor$ will denote the greatest integer $\leq x$.


\section{Decomposing the Bergman space}\label{S:Decomp}

\subsection{Allowable indices}

Let $\gamma$ be any positive real number.  Since $\h_{\gamma}$ is Reinhardt, every $f\in\co\left(\h_{\gamma}\right)$ has a unique Laurent expansion 
\[
 f(z) =  \sum_{\alpha \in \mathcal{A}} a_{\alpha}z^{\alpha},
\]
where $\ca$ is the set of multi-indices $\{\alpha = (\alpha_1,\alpha_2) \in \Z^2 : \alpha_1 \ge 0 \}$.  Since $z_2 \ne 0$ on $\h_{\gamma}$, $\alpha_2$ is allowed to be any negative integer.   Imposing square integrability, however, restricts the range of $\alpha_2$ allowed in the sum.

\begin{definition}
Say a multi-index $\alpha$ is $\gamma$-allowable if the monomial $z^{\alpha} \in A^2(\h_{\gamma})$.  Let $\ca^2_{\gamma}$ denote the set of $\gamma$-allowable multi-indices.
\end{definition}

It follows that $\left\{z^{\alpha}: \alpha \in \ca^2_{\gamma}\right\}$ is an orthogonal basis for $A^{2}(\h_{\gamma})$.  We now determine the set $\ca^2_{\gamma}$, and calculate the norms of these monomials.

\begin{lemma}\label{L:gammaAllowableMultiIndex} For any $\gamma\in\R^+$,

\begin{itemize}
\item[(i)] $\mathcal{A}^2_{\gamma} = \left\{(\alpha_{1},\alpha_{2}): \alpha_{1} \ge 0,\ \alpha_1 + \gamma(\alpha_{2} +1) > -1 \right\}.$ 
\smallskip
\item[(ii)] For $\alpha \in \mathcal{A}^2_{\gamma}$,

\begin{equation}\label{E:gammaNorm}
c_{\gamma,\alpha}^{2} :=  \left\|z^{\alpha}\right\|^{2} = \frac{\gamma\pi^2}{(\alpha_{1} + 1)^{2} + \gamma(\alpha_{1} +1)(\alpha_{2}+1)}.
\end{equation}
\end{itemize}

\begin{proof}  Let $H_{\gamma}$ be the Reinhardt shadow of $\h_{\gamma}$.  Using polar coordinates,

\begin{align}\label{E:gammaNorm2}
	\int_{\h_{\gamma}} |z_{1}^{\alpha_1}z_{2}^{\alpha_2}|^2\;dV
	&= 4\pi^2\int_{H_{\gamma}} r_1^{2\alpha_{1}+1}r_2^{2\alpha_{2}+1}\;dr \notag\\
	&= 4\pi^2 \int_0^1 \int_0^{r_2^{\frac 1\gamma}} r_1^{2\alpha_{1}+1}r_2^{2\alpha_{2}+1}\;dr_1\,dr_2 \notag\\
	&=\frac{2\pi^2}{\alpha_1+1} \int_0^1 r_2^{\frac 1\gamma(2\alpha_1+2) + 2\alpha_2+1}\, dr_2.
\end{align}

This integral converges if and only if $\frac 1\gamma(2\alpha_1+2) + 2\alpha_2+1> -1$. Notice that
$$\frac 1\gamma(2\alpha_1+2) + 2\alpha_2+1> -1\quad \Leftrightarrow\quad \alpha_1 + \gamma(\alpha_{2} +1) > -1,$$
so (i) holds.

Furthermore, when the integral \eqref{E:gammaNorm2} converges, it equals 

\begin{equation*}
\frac{\pi^2}{\alpha_1+1}\cdot \frac 1{\frac 1\gamma(\alpha_1+1) +(\alpha_2 +1)}= \frac{\gamma\pi^2}{(\alpha_{1} + 1)^{2} + \gamma(\alpha_{1} +1)(\alpha_{2}+1)}.
\end{equation*}
Thus, \eqref{E:gammaNorm} holds.

\end{proof}
\end{lemma}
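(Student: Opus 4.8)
The plan is to compute the integral $\int_{\h_\gamma} |z^\alpha|^2 \, dV$ directly, exploiting the Reinhardt symmetry of $\h_\gamma$ to reduce it to a pair of one-dimensional radial integrals. Writing $z_j = r_j e^{i\theta_j}$, the two angular integrations are trivial since $|z^\alpha|^2$ does not depend on $\theta_1, \theta_2$; each contributes a factor of $2\pi$, and the Jacobian contributes $r_1 r_2$. Thus the computation collapses to an integral over the Reinhardt shadow $H_\gamma = \{(r_1, r_2) : 0 \le r_1 < r_2^{1/\gamma}, \, 0 < r_2 < 1\}$, namely $4\pi^2 \int_{H_\gamma} r_1^{2\alpha_1 + 1} r_2^{2\alpha_2 + 1} \, dr_1 \, dr_2$.

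Next I would iterate this integral, performing the $r_1$ integration first over the interval $[0, r_2^{1/\gamma}]$. Since $\alpha_1 \ge 0$, the integrand $r_1^{2\alpha_1 + 1}$ is bounded near $r_1 = 0$, so this inner integral always converges and produces a factor $\frac{1}{2(\alpha_1 + 1)} r_2^{(2\alpha_1 + 2)/\gamma}$. The entire question of convergence therefore rests on the remaining one-variable integral $\int_0^1 r_2^{\frac{1}{\gamma}(2\alpha_1 + 2) + 2\alpha_2 + 1} \, dr_2$, which converges if and only if its exponent exceeds $-1$.

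Finally I would record the two conclusions. For part (i), the convergence condition $\frac{1}{\gamma}(2\alpha_1 + 2) + 2\alpha_2 + 1 > -1$ is, after multiplying through by $\gamma/2 > 0$ and rearranging, exactly $\alpha_1 + \gamma(\alpha_2 + 1) > -1$; this algebraic equivalence is the only place requiring any care, since one must keep track of the sign of $\gamma$ when clearing the denominator. For part (ii), when the exponent does exceed $-1$ the $r_2$ integral evaluates to the reciprocal of (exponent $+ 1$), and multiplying by the prefactor $\frac{2\pi^2}{\alpha_1 + 1}$ and clearing the $1/\gamma$ yields the stated formula $\frac{\gamma \pi^2}{(\alpha_1 + 1)^2 + \gamma(\alpha_1 + 1)(\alpha_2 + 1)}$. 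There is no genuine obstacle here beyond careful bookkeeping: the result is an elementary evaluation of a power integral whose convergence is dictated entirely by the behavior of the integrand near $r_2 = 0$, which in turn reflects the geometry of $\h_\gamma$ at the singular point $(0,0)$.
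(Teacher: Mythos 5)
Your proposal is correct and follows essentially the same route as the paper: polar coordinates reduce the norm to an iterated integral over the Reinhardt shadow, the inner $r_1$ integration over $[0, r_2^{1/\gamma}]$ is harmless since $\alpha_1 \ge 0$, and both the membership condition (i) and the norm formula (ii) are read off from the resulting one-variable power integral in $r_2$. No gaps; the bookkeeping, including the equivalence $\frac{1}{\gamma}(2\alpha_1+2)+2\alpha_2+1 > -1 \Leftrightarrow \alpha_1+\gamma(\alpha_2+1) > -1$, matches the paper's argument exactly.
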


A similar result holds for $\ca^p_\gamma$, the multi-indices $\alpha$ such that $z^\alpha\in A^p\left(\h_\gamma\right)$. The direct analog of \eqref{E:gammaNorm2} shows that $(\alpha_1,\alpha_2)\in \ca^p_\gamma$ if and only if $\alpha_1\geq 0$ and

\begin{align}\label{E:p_allowable}
\frac 1\gamma &(p\alpha_1+2) + p\alpha_2+1> -1\notag \\
\Leftrightarrow\quad \alpha_2 &>-\frac 1\gamma\alpha_1-\frac 2p-\frac 2{\gamma\cdot p}.
\end{align}

\begin{remark} In \cite{Zwonek99}, Lemma 5, Zwonek has characterized the monomials in $\ca^2(R)$ for more general Reinhardt domains $R$ than our domains $\h_\gamma$. The characterization involves cone considerations in the logarithmic image of $R$. A similar characterization of the indices $\ca^p(R)$ is given in \cite{Zwonek00}. It is easily checked that condition \eqref{E:p_allowable} coincides with Zwonek's for the domains $\h_\gamma$.

\end{remark}

When $\gamma = \frac mn \in \Q^+$, the strict inequality defining $(\alpha_1,\alpha_2)\in\ca^2_{\gamma}$ can be re-expressed as a non-strict inequality: 
\begin{align}\label{E:allowableMultiIndices}
\mathcal{A}^2_{m/n} &= \left\{(\alpha_{1},\alpha_{2}): \alpha_{1} \ge 0,\ \alpha_1 + \frac mn(\alpha_{2} +1) > -1 \right\} \notag\\
&= \left\{(\alpha_{1},\alpha_{2}): \alpha_{1} \ge 0,\ n\alpha_1 + m\alpha_{2} \ge -m -n +1 \right\}.
\end{align}
The simple step of passing to this closed condition on $\alpha_2$ is crucial for our subsequent work in the rational case. Notice this step is not possible if
$\gamma \notin \Q^+$.

It is convenient to interpret the multi-indices in $\mathcal{A}^2_{m/n}$ geometrically, as an explicitly closed subset of the lattice $\Z^2$ using the second representation in \eqref{E:allowableMultiIndices}. Thus, $z^{\alpha} \in A^2(\h_{m/n})$ 
if and only if $\alpha_1 \ge 0$, and $\alpha = (\alpha_1, \alpha_2)\in\Z^2$ lies on or above the line
\begin{equation}\label{E:RationalThreshold}
\alpha_2 = -\frac nm \alpha_1 + \frac{1-n-m}{m}.
\end{equation}
Call this subset of $\Z^2$ the {\it lattice point diagram} associated to $\mathcal{A}^2_{m/n}$.
Monomials corresponding to the fourth quadrant of the lattice point diagram, i.e. those lattice points where $\alpha_2 <0$, play an essential role in the analysis to follow.
The boundary lines described by \eqref{E:RationalThreshold}, corresponding to $\gamma = 1,2,3$, are illustrated below.

{\centering
\begin{tikzpicture}

\draw[-{latex}, thick] (0,0) -- (13,0) node[anchor=south] {$\alpha_1$};
\draw[-{latex}, thick] (0,0) -- (0,-5) node[anchor=east] {$\alpha_2$};

\draw[-stealth, thin] (0,-1) -- (4,-5) node[anchor=west] {$\gamma = 1$};
\draw[-stealth, thin] (0,-1) -- (8,-5) node[anchor=west] {$\gamma = 2$};
\draw[-stealth, thin] (0,-1) -- (12,-5) node[anchor=west] {$\gamma = 3$};

\filldraw[black] (0,0) circle (1.5pt) node[anchor=east] {(0,0)};
\filldraw[black] (1,0) circle (1.5pt) ;
\filldraw[black] (2,0) circle (1.5pt) ;
\filldraw[black] (3,0) circle (1.5pt) ;
\filldraw[black] (4,0) circle (1.5pt) ;
\filldraw[black] (5,0) circle (1.5pt) ;
\filldraw[black] (6,0) circle (1.5pt) ;
\filldraw[black] (7,0) circle (1.5pt) ;
\filldraw[black] (8,0) circle (1.5pt) ;
\filldraw[black] (9,0) circle (1.5pt) ;
\filldraw[black] (10,0) circle (1.5pt) ;
\filldraw[black] (11,0) circle (1.5pt) ;
\filldraw[black] (12,0) circle (1.5pt) ;

\filldraw[black] (0,-1) circle (1.5pt) ;
\filldraw[black] (1,-1) circle (1.5pt) ;
\filldraw[black] (2,-1) circle (1.5pt) ;
\filldraw[black] (3,-1) circle (1.5pt) ;
\filldraw[black] (4,-1) circle (1.5pt) ;
\filldraw[black] (5,-1) circle (1.5pt) ;
\filldraw[black] (6,-1) circle (1.5pt) ;
\filldraw[black] (7,-1) circle (1.5pt) ;
\filldraw[black] (8,-1) circle (1.5pt) ;
\filldraw[black] (9,-1) circle (1.5pt) ;
\filldraw[black] (10,-1) circle (1.5pt) ;
\filldraw[black] (11,-1) circle (1.5pt) ;
\filldraw[black] (12,-1) circle (1.5pt) ;

\filldraw[black] (0,-2) circle (1.5pt) ;
\filldraw[black] (1,-2) circle (1.5pt) ;
\filldraw[black] (2,-2) circle (1.5pt) ;
\filldraw[black] (3,-2) circle (1.5pt) ;
\filldraw[black] (4,-2) circle (1.5pt) ;
\filldraw[black] (5,-2) circle (1.5pt) ;
\filldraw[black] (6,-2) circle (1.5pt) ;
\filldraw[black] (7,-2) circle (1.5pt) ;
\filldraw[black] (8,-2) circle (1.5pt) ;
\filldraw[black] (9,-2) circle (1.5pt) ;
\filldraw[black] (10,-2) circle (1.5pt) ;
\filldraw[black] (11,-2) circle (1.5pt) ;
\filldraw[black] (12,-2) circle (1.5pt) ;

\filldraw[black] (0,-3) circle (1.5pt) ;
\filldraw[black] (1,-3) circle (1.5pt) ;
\filldraw[black] (2,-3) circle (1.5pt) ;
\filldraw[black] (3,-3) circle (1.5pt) ;
\filldraw[black] (4,-3) circle (1.5pt) ;
\filldraw[black] (5,-3) circle (1.5pt) ;
\filldraw[black] (6,-3) circle (1.5pt) ;
\filldraw[black] (7,-3) circle (1.5pt) ;
\filldraw[black] (8,-3) circle (1.5pt) ;
\filldraw[black] (9,-3) circle (1.5pt) ;
\filldraw[black] (10,-3) circle (1.5pt) ;
\filldraw[black] (11,-3) circle (1.5pt) ;
\filldraw[black] (12,-3) circle (1.5pt) ;

\filldraw[black] (0,-4) circle (1.5pt) ;
\filldraw[black] (1,-4) circle (1.5pt) ;
\filldraw[black] (2,-4) circle (1.5pt) ;
\filldraw[black] (3,-4) circle (1.5pt) ;
\filldraw[black] (4,-4) circle (1.5pt) ;
\filldraw[black] (5,-4) circle (1.5pt) ;
\filldraw[black] (6,-4) circle (1.5pt) ;
\filldraw[black] (7,-4) circle (1.5pt) ;
\filldraw[black] (8,-4) circle (1.5pt) ;
\filldraw[black] (9,-4) circle (1.5pt) ;
\filldraw[black] (10,-4) circle (1.5pt) ;
\filldraw[black] (11,-4) circle (1.5pt) ;
\filldraw[black] (12,-4) circle (1.5pt) ;
 
\end{tikzpicture}
}
\smallskip

\noindent The lattice point diagram indicates a useful way to decompose $A^2(\h_{m/n})$.  When $\gamma = \frac mn$, $\gcd(m,n)=1$, split the Bergman space into $m$ orthogonal subspaces
\begin{equation}\label{E:bergmanspace_decomp}
A^2(\h_{m/n}) = \cs_0 \oplus \cs_1 \oplus \cdots \oplus \cs_{m-1},
\end{equation}
where $\cs_j$ is the subspace spanned by monomials of the form $z^{\alpha}$, where $\alpha_1 \equiv j \mod m$.  Let 
\begin{equation}\label{E:Sjindices}
\cg_j=\left\{\alpha=(\alpha_1, \alpha_2)\in \ca^2_{m/n}: \alpha_1\equiv j\pmod m\right\}.
\end{equation}
That the decomposition \eqref{E:bergmanspace_decomp} is orthogonal follows from the fact that $\h_{m/n}$ is Reinhardt and $\cg_j\cap \cg_k=\varnothing$ if $j\neq k$.
Each $\cs_j$ is a closed subspace of $A^2(\h_{m/n})$, thus a Hilbert space.  Therefore the orthogonal projection, $L^2(\h_{m/n})\longrightarrow\cs_j$, is well-defined and represented by integration against a kernel, $K_j$.  It follows that
\begin{equation}\label{E:bergman_decomp}
.
\end{equation}
Call each $K_j$ a sub-Bergman kernel. In the next subsection, we shall focus on the subspaces $\cs_j$ and explicitly compute each $K_j$ in closed form.
For any rational exponent $\gamma$, \eqref{E:bergman_decomp} then implies an explicit expression for $\B_\gamma (z,w)$.

For irrational $\gamma$, the absence of a {\it finite} decomposition like \eqref{E:bergman_decomp} is the reason the methods in this paper do not imply an explicit closed form expression for 
the Bergman kernel of $\h_\gamma$. After $L^p$ mapping properties of the operators associated to the subkernels $K_j$ are proved, it will also be clear that the
lack of \eqref{E:bergman_decomp}  is the cause of the difference between Theorems \ref{T:MainLpRational} and \ref{T:MainLpIrrational}.

\subsection{Computing the sub-Bergman kernels}

Let $\gamma = \frac mn \in \Q^+$, $\gcd (m,n)=1$.  For each $j=0, \dots, m-1$, let $K_j$ be the sub-Bergman kernel of $\B_{m/n}$ given by \eqref{E:bergman_decomp} and $\cs_j$ the subspace in \eqref{E:bergmanspace_decomp}.  By definition, $\{z^{\alpha} c_{\gamma,\alpha}^{-1}: \alpha \in \cg_j \}$ is an orthonormal basis for $\cs_j$, where $\cg_j$ is given by \eqref{E:Sjindices} and $c_{\gamma,\alpha}$ by \eqref{E:gammaNorm}.  It follows that $K_j$ can be written as the following sum, which converges normally on $\h_{m/n} \times \h_{m/n}$:

\begin{equation}\label{E:subBergmanSum}
K_j(z,w) = \sum_{\alpha \in \cg_j} \frac{z^{\alpha}\bar{w}^\alpha}{c_{m/n,\alpha}^2}.
\end{equation}
We now compute this sum in closed form:

\begin{theorem}\label{T:subKernelCalculation}
Let $m,n \in \Z^+$ be relatively prime.  The sub-Bergman kernel $K_j$ of the domain $\h_{m/n}$ is given by 
\begin{align}\label{E:formulaK_j}
K_{j}(z,w) &= \frac{n}{m\pi^2} \cdot \frac{f_j(s,t)g_j(t) s^jt^{n-1- E_j}}{(1-t)^2(t^n-s^m)^2},
\end{align}
where $s = z_1\bar{w}_1, t = z_2\bar{w}_2, E_{j} = \left\lfloor \frac{(j+1)n-1}{m} \right\rfloor$, and $f_j$ and $g_j$ are the polynomials
\begin{align}
f_j(s,t) &= (j+1)t^n + (m - j-1)s^m, \label{E:NumPolynomialf} \\ 
g_j(t) &= \left(j+1 - \frac mn E_j\right) + \left(\frac mn + \frac mn E_j -j-1 \right)t.\label{E:NumPolynomialg}
\end{align}
\end{theorem}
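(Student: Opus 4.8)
The plan is to start from the series \eqref{E:subBergmanSum}, substitute the explicit norms from Lemma \ref{L:gammaAllowableMultiIndex}(ii), parameterize the index set $\cg_j$, and then reduce the resulting double sum to a product of two elementary geometric-type series. First I would invert \eqref{E:gammaNorm} with $\gamma=m/n$ to get
$$\frac{1}{c_{m/n,\alpha}^2} = \frac{n}{m\pi^2}\Big[(\alpha_1+1)^2 + \tfrac{m}{n}(\alpha_1+1)(\alpha_2+1)\Big],$$
and record $z^{\alpha}\bar w^{\alpha}=s^{\alpha_1}t^{\alpha_2}$ with $s=z_1\bar w_1$, $t=z_2\bar w_2$.

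The essential preliminary is a clean description of $\cg_j$. I would write $\alpha_1 = j+km$ for $k\ge 0$, and use the closed allowability condition in \eqref{E:allowableMultiIndices}: for fixed $\alpha_1=j+km$ the admissible $\alpha_2$ are the integers with $\alpha_2 \ge \frac{-m-n+1-nj}{m}-nk$. Since $nk\in\Z$, the least such integer is $-1-E_j-nk$, where a short ceiling/floor computation gives $\big\lceil\frac{-m-n+1-nj}{m}\big\rceil = -1-\big\lfloor\frac{n(j+1)-1}{m}\big\rfloor = -1-E_j$. This is the single point where the residue arithmetic and the floor defining $E_j$ enter. Setting $\alpha_2+1 = -E_j-nk+\beta$ with $\beta\ge 0$, the key observation that collapses the computation is that expanding the bracket as $(\alpha_1+1)\big[(\alpha_1+1)+\tfrac{m}{n}(\alpha_2+1)\big]$ causes the terms linear in $k$ inside the second factor to cancel, leaving the product
$$\big[(j+1)+km\big]\cdot\big[(j+1)-\tfrac{m}{n}E_j+\tfrac{m}{n}\beta\big],$$
i.e.\ a function of $k$ times a function of $\beta$.

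Because the bracket factors and $s^{\alpha_1}t^{\alpha_2}=s^{j}t^{-1-E_j}\,(s^m t^{-n})^{k}\,t^{\beta}$, the double series over $(k,\beta)$ splits as a product of two single series. Here I would note that on $\h_{m/n}\times\h_{m/n}$ one has $|t|<1$ and $|s^m t^{-n}|<1$ (from $|z_1|^m<|z_2|^n$ and $|w_1|^m<|w_2|^n$), which gives absolute convergence and justifies the factorization. Summing each series by means of $\sum_{k\ge 0}u^{k}=(1-u)^{-1}$ and $\sum_{k\ge 0}k\,u^{k}=u(1-u)^{-2}$ — with $u=s^m/t^n$ in the $k$-sum and $u=t$ in the $\beta$-sum — and clearing denominators by the appropriate powers of $t^n$, produces exactly the numerator polynomials $f_j$ and $g_j$ of \eqref{E:NumPolynomialf}--\eqref{E:NumPolynomialg} over the denominators $(t^n-s^m)^2$ and $(1-t)^2$. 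Collecting the prefactors $\frac{n}{m\pi^2}$, $s^{j}$ and $t^{\,n-1-E_j}$ then yields \eqref{E:formulaK_j}.

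I expect the main obstacle to be the second step: pinning down the lower limit of $\alpha_2$ exactly as a function of $k$ and matching the resulting constant to $E_j$ through the ceiling identity. Everything after the factorization is routine summation; the only care needed is bookkeeping of the powers of $t^n$ when clearing denominators, so that the two rational factors emerge in the normalized form stated in the theorem.
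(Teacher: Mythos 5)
Your proposal is correct, and the steps you defer as routine do check out: the ceiling identity $\bigl\lceil \frac{-m-n+1-nj}{m}\bigr\rceil = -1-E_j$ gives the lower limit $\alpha_2 \ge -1-E_j-nk$ on the line $\alpha_1=j+km$ (matching the paper's $\ell(j)$ in \eqref{E:ell(j)}), the bracket does factor as $[(j+1)+km]\cdot[(j+1)-\frac mn E_j+\frac mn\beta]$ because $km-\frac mn\cdot nk=0$, and the two resulting series sum to $t^n f_j(s,t)/(t^n-s^m)^2$ and $g_j(t)/(1-t)^2$, with absolute convergence supplied by $|t|<1$ and $|s^mt^{-n}|<1$. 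Your route, however, is genuinely different from the paper's. The paper sums the inner $\alpha_2$-series first and then attacks the outer sum over $\cR_j$ via the auxiliary variable $u=st^{-n/m}$, splitting the computation into three pieces $I(j)$, $J_1(j)$, $J_2(j)$ expressed through derivatives of $u^{j+1}/(1-u^m)$ and recombined by Leibniz's rule; since $u$ involves the fractional power $t^{-n/m}$, the paper must restrict to the diagonal $s=|z_1|^2$, $t=|z_2|^2$ (fixing the positive root $t^{1/m}$) and recover the off-diagonal formula by polarization at the end. Your reindexing $\alpha_1=j+km$, $\alpha_2+1=-E_j-nk+\beta$ keeps every exponent of $s$ and $t$ an integer, so no branch ambiguity arises and the polarization step is unnecessary: you may work with $s=z_1\bar w_1$, $t=z_2\bar w_2$ throughout. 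The cancellation you exploit is not accidental — the reciprocal norm is $(\alpha_1+1)$ times the affine function whose vanishing defines the critical line \eqref{E:RationalThreshold}, and the substitution measures $\alpha_2$ from that line, so the second factor depends only on the offset $\beta$ and not on $k$ — and it replaces the paper's three-sum decomposition and differential identities with a clean separation of the double series into a product of two elementary one-variable sums. What the paper's derivative formalism buys is a more mechanical procedure that does not rely on noticing this factorization; what your argument buys is brevity and the elimination of the fractional-power/polarization apparatus.
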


\begin{proof} 

  First we find $K_j(z,z)$, then use polarization to move off the diagonal.  Working on the diagonal bypasses the ambiguity of raising a complex number to a fractional exponent.  Therefore, until the last two lines of the proof, let $s = |z_{1}|^{2}, t = |z_{2}|^{2}$. Also fix $t^{1/m}$ to be the positive real root.

Starting from \eqref{E:subBergmanSum} and using (\ref{E:gammaNorm}) and (\ref{E:RationalThreshold}),
\begin{align}
	K_j(z,z) 
	&= \sum_{\alpha \in \cg_j} \frac{s^{\alpha_1} t^{\alpha_2}}{c^2_{m/n,\alpha}} \notag\\
	&=\frac{n}{m\pi^{2}}\sum_{\alpha_1 \in \cR_j} \sum \ \left[(\alpha_{1} + 1)^{2} + \frac{m}{n}(\alpha_{1} +1)(\alpha_{2}+1)\right] s^{\alpha_{1}} t^{\alpha_{2}} \label{E:DoubleSum1},
\end{align}
where $\mathcal{R}_j := \{\alpha_{1} \ge 0: \alpha_{1} = j \mod m\}$ and the  inner sum is taken over integers $\alpha_2$ with $\alpha_2 \ge -\frac{n\alpha_{1}}{m} + \frac{1-n-m}{m}$.  We want to compute the smallest such integer, called $\ell(j)$.  Notice that

\begin{align*}
-\frac{n\alpha_{1}}{m} + \frac{1-n-m}{m} = -1 -\frac{n(\alpha_1-j)}{m} -\frac{(j+1)n-1}{m},
\end{align*}
and since $\alpha_{1}\equiv j \mod m$, it follows that

\begin{equation}\label{E:ell(j)}
\ell(j) = -1 - \frac{n(\alpha_{1}-j)}{m} - E_j.
\end{equation}
Therefore,

\begin{align*}
\eqref{E:DoubleSum1} 
	&= \frac{n}{m\pi^{2}}  \sum_{\alpha_{1} \in \mathcal{R}_j} \sum_{\alpha_{2} = \ell(j)}^{\infty} \ \left[(\alpha_{1} + 1)^{2} + \frac mn (\alpha_{1} +1)(\alpha_{2}+1)\right] s^{\alpha_{1}} t^{\alpha_{2}}\\
 	&= \frac{n}{m\pi^2} \sum_{\alpha_{1} \in \mathcal{R}_j} \sum_{\alpha_{2} = \ell(j)}^{\infty}(\alpha_{1} + 1)^{2}s^{\alpha_{1}} t^{\alpha_{2}} + \frac{1}{\pi^2} \sum_{\alpha_{1} \in \mathcal{R}_j} \sum_{\alpha_{2} = \ell(j)}^{\infty}(\alpha_{1} +1)(\alpha_{2}+1) s^{\alpha_{1}} t^{\alpha_{2}}\\
	&:= \frac{n}{m\pi^2} I(j) + \frac{1}{\pi^2}J(j).
\end{align*}

It remains to compute the sums $I(j)$ and $J(j)$.  Let $u := st^{-n/m}$, and note that both $0 < |t| < 1$ and $|u| < 1$.  Summation of $I(j)$ is straightforward:

\begin{align}
I(j) =  \sum_{\alpha_{1} \in \mathcal{R}_j}(\alpha_{1} + 1)^{2}s^{\alpha_{1}} \sum_{\alpha_{2} = \ell(j)}^{\infty}t^{\alpha_{2}}
	&=  \frac{1}{1-t}\sum_{\alpha_{1} \in \mathcal{R}_j}(\alpha_{1} + 1)^{2}s^{\alpha_{1}}t^{\ell(j)} \notag\\
	&=  \frac{t^{nj/m-1-E_j}}{1-t} \cdot \sum_{\alpha_{1} \in \mathcal{R}_j}(\alpha_{1} + 1)^{2}u^{\alpha_{1}} \notag\\
	&=  \frac{t^{nj/m-1-E_j}}{1-t} \cdot \frac{d}{du}\left(u \frac{d}{du}\left(\frac{u^{j+1}}{1-u^m}\right)\right). \label{E:SumIj}
\end{align}

Summation of $J(j)$ is slightly more involved. First, split the sum into two pieces:

\begin{align*}
J(j) &=  \sum_{\alpha_{1} \in \mathcal{R}_j}(\alpha_{1} +1)s^{\alpha_{1}} \sum_{\alpha_{2} = \ell(j)}^{\infty}(\alpha_{2}+1)  t^{\alpha_{2}}\\
	&=  \sum_{\alpha_{1} \in \mathcal{R}_j}(\alpha_{1} +1)s^{\alpha_{1}}\left[\frac{t^{\ell(j)+1}}{(1-t)^2}+\frac{(\ell(j)+1)t^{\ell(j)}}{1-t}\right]\\
	&= \frac{t}{(1-t)^2} \sum_{\alpha_1 \in \cR_j}(\alpha_1+1)s^{\alpha_1}t^{\ell(j)} + \frac{1}{1-t} \sum_{\alpha_1 \in \cR_j} (\alpha_1+1)(\ell(j)+1)s^{\alpha_1}t^{\ell(j)} \\
	&:= J_1(j) + J_2(j).
\end{align*}
For the first piece, it follows

\begin{align}
	J_1(j) &= \frac{t^{nj/m-E_j}}{(1-t)^2}  \sum_{\alpha_1 \in \cR_j} (\alpha_1+1)u^{\alpha_1} \notag \\
	&= \frac{t^{nj/m-E_j}}{(1-t)^2} \cdot \frac{d}{du}\left(\frac{u^{j+1}}{1-u^m}\right) \label{E:SumJ1j}.
\end{align}
For the second piece,

\begin{align}
	J_2(j) &= \frac{t^{nj/m-1-E_j}}{1-t}  \sum_{\alpha_1 \in \cR_j}(\alpha_1+1)\left(\ell(j)+1\right)u^{\alpha_1} \notag\\
	&= \frac{t^{nj/m-1-E_j}}{1-t}  \left[\left(\frac{nj}{m}-E_j\right) \sum_{\alpha_1 \in \cR_j}(\alpha_1+1)u^{\alpha_1} - \frac nm \sum_{\alpha_1 \in \cR_j}(\alpha_1+1)\alpha_1 u^{\alpha_1} \right] \notag \\
	&= \frac{t^{nj/m-1-E_j}}{1-t}  \left[\left(\frac{nj}{m}-E_j\right) \cdot \frac{d}{du}\left(\frac{u^{j+1}}{1-u^m}\right) - \frac nm \cdot u\frac{d^2}{du^2}\left(\frac{u^{j+1}}{1-u^m}\right)\right] \label{E:SumJ2j}.
\end{align}
Using Leibniz's rule, \eqref{E:SumIj} and \eqref{E:SumJ2j} can be combined more simply as
\begin{align*}
	I(j) + \frac mn J_2(j) &= \frac{t^{nj/m-1-E_j}}{1-t} \left(j+1- \frac mn E_j \right) \cdot \frac{d}{du}\left(\frac{u^{j+1}}{1-u^m}\right).
\end{align*}
Combining this with \eqref{E:SumJ1j}, we now have
\begin{align}
	K_j(z,z) &= \frac{n}{m\pi^2} \left[I(j) + \frac mn J_2(j) + \frac mn J_1(j) \right] \notag \\
		&= \frac{n}{m\pi^2} \cdot g_j(t) \cdot \frac{t^{nj/m-1-E_j}}{(1-t)^2}  \cdot \frac{d}{du}\left( \frac{u^{j+1}}{1-u^m}\right)\label{E:KjPrederivative},
\end{align}
where $g_j(t) := j+1-\frac mn E_j + (\frac mn+\frac mn E_j -j-1)t $.

Finally,
\begin{align}
	\eqref{E:KjPrederivative} &= \frac{n}{m\pi^2} \cdot g_j(t) \cdot \frac{t^{nj/m-1-E_j}}{(1-t)^2}  \cdot \frac{u^j}{(1-u^m)^2} \cdot \left(j+1+(m-j-1)u^m \right) \notag\\
		&= \frac{n}{m\pi^2} \cdot g_j(t) \cdot \frac{s^j t^{-1-E_j}}{(1-t)^2} \cdot \frac{t^{2n}}{(t^n-s^m)^2} \cdot \left(j+1+(m-j-1)u^m \right) \notag\\
		&= \frac{n}{m\pi^2} \cdot f_j(s,t) g_j(t) \cdot \frac{s^j t^{n-1-E_j}}{(1-t)^2(t^n-s^m)^2} \label{E:KjFormula},
\end{align}
where $f_j(s,t) := (j+1)t^n+(m-j-1)s^m$.  This establishes the desired formula for $K_j(z,z)$.  

Polarization now gives the formula for $K_j(z,w)$, substituting $s=z_1\bar{w}_1$ and $t=z_2\bar{w}_2$ into equation \eqref{E:KjFormula}. See section 1.1.5 of  \cite{DAngelo_SCVRHS} for an explanation of polarization in this context.
\end{proof}

The decomposition \eqref{E:bergman_decomp} now yields

\begin{corollary}
Let $m,n \in \Z^+$ be relatively prime.  The Bergman kernel of $\h_{m/n}$ is the explicit rational function
\begin{equation*}
\B_{m/n}(z,w) = \sum_{j=0}^{m-1} K_j(z,w),
\end{equation*}
where each $K_j(z,w)$ is calculated in Theorem \ref{T:subKernelCalculation}.
\end{corollary}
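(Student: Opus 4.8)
The plan is to deduce the corollary directly from the orthogonal decomposition \eqref{E:bergmanspace_decomp} together with the closed-form formulas of Theorem \ref{T:subKernelCalculation}; no fresh analysis of the monomials is required. Concretely, the identity $\B_{m/n}=\sum_{j=0}^{m-1}K_j$ is nothing but the kernel form of the Hilbert space splitting $A^2(\h_{m/n})=\cs_0\oplus\cdots\oplus\cs_{m-1}$, and the asserted rationality is then immediate because a finite sum of rational functions is rational.

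First I would establish the kernel identity by regrouping the defining series. Applying the orthonormal-basis formula \eqref{E:BergmanInfiniteSum} to the basis $\{z^{\alpha}c_{m/n,\alpha}^{-1}:\alpha\in\ca^2_{m/n}\}$ of $A^2(\h_{m/n})$ gives
\[
\B_{m/n}(z,w)=\sum_{\alpha\in\ca^2_{m/n}}\frac{z^{\alpha}\bar w^{\alpha}}{c_{m/n,\alpha}^2}.
\]
Every index $\alpha=(\alpha_1,\alpha_2)\in\ca^2_{m/n}$ has $\alpha_1\ge0$, so its first coordinate lies in exactly one residue class modulo $m$; hence the index set partitions as the disjoint union $\ca^2_{m/n}=\bigsqcup_{j=0}^{m-1}\cg_j$, with $\cg_j$ as in \eqref{E:Sjindices}. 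Splitting the sum along this partition and comparing with the definition \eqref{E:subBergmanSum} yields $\B_{m/n}(z,w)=\sum_{j=0}^{m-1}K_j(z,w)$. Conceptually this is just the statement that, since the $\cs_j$ are mutually orthogonal closed subspaces with $A^2(\h_{m/n})=\bigoplus_j\cs_j$, the identity on $A^2(\h_{m/n})$ is the sum of the orthogonal projections onto the $\cs_j$; writing $\mathbf{K}_j$ for the orthogonal projection $L^2(\h_{m/n})\to\cs_j$ (whose reproducing kernel is $K_j$), one has $\mathbf{K}_j=P_j\,\mathbf{B}_{m/n}$ for the intrinsic projection $P_j\colon A^2(\h_{m/n})\to\cs_j$, so that $\sum_j\mathbf{K}_j=\bigl(\sum_jP_j\bigr)\mathbf{B}_{m/n}=\mathbf{B}_{m/n}$, and passing to kernels recovers the same identity.

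With the decomposition in hand, rationality follows at once: Theorem \ref{T:subKernelCalculation} exhibits each $K_j(z,w)$ as the rational expression \eqref{E:formulaK_j} in $s=z_1\bar w_1$ and $t=z_2\bar w_2$, and since $s$ and $t$ are themselves polynomials in $(z,w)\in\h_{m/n}\times\h_{m/n}$, each $K_j$ is rational in $(z,w)$; a finite sum of rational functions is rational, which completes the proof. I do not anticipate a serious obstacle, as this is a genuine corollary of the preceding theorem. The only point demanding care is the legitimacy of regrouping the infinite series by residue class, which is justified by the normal convergence of \eqref{E:BergmanInfiniteSum} on compact subsets of $\h_{m/n}\times\h_{m/n}$ (equivalently, by absolute convergence of the scalar series of moduli), so that the terms may be summed in any order.
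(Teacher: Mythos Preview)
Your proposal is correct and follows essentially the same approach as the paper: the corollary is deduced directly from the orthogonal decomposition \eqref{E:bergman_decomp} together with the explicit rational formulas of Theorem \ref{T:subKernelCalculation}, with rationality following since a finite sum of rational functions is rational. The paper's own proof is the single sentence ``The decomposition \eqref{E:bergman_decomp} now yields,'' so your added justification (regrouping via the partition $\ca^2_{m/n}=\bigsqcup_j\cg_j$ and appealing to normal convergence) simply spells out what the paper leaves implicit.
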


\subsection{The Lu Qi-Keng Problem}
In general, the Lu Qi-Keng problem is to determine which domains $\Omega \subset \C^n$ have vanishing Bergman kernel.  See \cite{Boas00} for background and more information.  In \cite{Edh16}, the problem is solved for the domains $\h_k$ and $\h_{1/k}$, $k\in\Z^+$:

\begin{theorem}\label{T:LQK_1/K}
Let $k \in \Z^+$.  The Bergman kernel $\B_{1/k}(z,w)\neq 0$ $\forall (z,w)\in \h_{1/k}\times\h_{1/k}$.
\end{theorem}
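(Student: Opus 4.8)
The plan is to reduce everything to the explicit sub-Bergman kernel formula of Theorem \ref{T:subKernelCalculation}. Writing $\gamma = 1/k = m/n$ with $m = 1$ and $n = k$ (already in lowest terms), the decomposition \eqref{E:bergman_decomp} collapses to a single term, since there is exactly $m = 1$ residue class modulo $m$. Thus $\B_{1/k} = K_0$, and crucially there is no sum over several subkernels in which cancellation could occur. This is the structural reason the Lu Qi-Keng property holds for these domains, and it is precisely the feature that fails once $m \geq 2$.

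Next I would substitute $m = 1$, $n = k$, $j = 0$ into \eqref{E:formulaK_j}. One computes $E_0 = \lfloor (k-1)/1 \rfloor = k - 1$, so that the polynomials \eqref{E:NumPolynomialf} and \eqref{E:NumPolynomialg} degenerate to $f_0(s,t) = t^k$ and $g_0(t) = 1/k$ (the coefficient of $t$ in $g_0$ vanishes), while the prefactor $s^j t^{n-1-E_0}$ becomes $s^0 t^{0} = 1$. After simplification this yields the closed form
\begin{equation*}
\B_{1/k}(z,w) = \frac{1}{\pi^2} \cdot \frac{(z_2 \bar w_2)^k}{(1 - z_2\bar w_2)^2\big((z_2\bar w_2)^k - z_1\bar w_1\big)^2},
\end{equation*}
a rational function with only integer exponents, so that the fractional-power ambiguity flagged in the proof of Theorem \ref{T:subKernelCalculation} does not arise for $m = 1$ and the polarization off the diagonal is unambiguous.

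The conclusion then follows from two elementary observations. First, the numerator $(z_2\bar w_2)^k$ is nonvanishing on $\h_{1/k}\times\h_{1/k}$: the defining inequality $|z_1|^{1/k} < |z_2|$ forces $z_2 \neq 0$, and likewise $w_2 \neq 0$. Second, the denominator is finite and nonzero there, because $|z_2\bar w_2| < 1$ gives $1 - z_2\bar w_2 \neq 0$, while $|z_1\bar w_1| = |z_1||w_1| < |z_2|^k|w_2|^k = |z_2\bar w_2|^k$ gives $(z_2\bar w_2)^k - z_1\bar w_1 \neq 0$. Hence $\B_{1/k}$ is a nonzero finite number at every point of $\h_{1/k}\times\h_{1/k}$, proving the claim.

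I do not expect a genuine obstacle in this argument: the real content is already contained in Theorem \ref{T:subKernelCalculation}, and the only things to verify carefully are the degeneration of $f_0$ and $g_0$ and the two strict inequalities controlling the denominator. The single conceptual point worth stating explicitly is why $m = 1$ is special, namely that the absence of a nontrivial sum in \eqref{E:bergman_decomp} leaves the kernel with a single monomial numerator, and such a monomial cannot vanish once $z_2$ and $w_2$ are known to be nonzero on the domain.
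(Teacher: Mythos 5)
Your proof is correct, and the computation checks out in every detail: with $m=1$, $n=k$, $j=0$ one gets $E_0=\lfloor (k-1)/1\rfloor = k-1$, $f_0(s,t)=t^k$, $g_0(t)\equiv \frac 1k$, and $s^jt^{n-1-E_j}=t^{k-1-(k-1)}=1$, so \eqref{E:formulaK_j} collapses to $\B_{1/k}(z,w)=\frac{1}{\pi^2}\cdot\frac{t^k}{(1-t)^2(t^k-s)^2}$ with $s=z_1\bar w_1$, $t=z_2\bar w_2$ (I verified this independently by summing the basis directly: the inner sum over $\alpha_2$ telescopes and the $k(\alpha_1+1)^2$ term cancels exactly, leaving $\frac{1}{\pi^2(1-t)^2}\sum_{a\ge 0}(a+1)s^at^{-k(a+1)}$, which gives the same rational function). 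The strict inequalities $|t|<1$ and $|s|=|z_1||w_1|<|z_2|^k|w_2|^k=|t|^k$, valid on $\h_{1/k}\times\h_{1/k}$, then make both denominator factors and the numerator nonvanishing. Be aware, however, that this is a genuinely different route from the paper's: Theorem \ref{T:LQK_1/K} is not proved here at all but imported from \cite{Edh16}, where the kernel of $\h_{1/k}$ was obtained via Bell's transformation formula for proper maps followed by, in the authors' own words, fairly elaborate algebraic manipulations. Your argument instead specializes the present paper's Theorem \ref{T:subKernelCalculation}, exploiting that $m=1$ forces the decomposition \eqref{E:bergman_decomp} to have a single summand; this buys a short, self-contained proof inside the lattice-point-diagram framework (consistent with the authors' stated preference for this method over Bell's formula), at the cost of invoking the full strength of the subkernel computation. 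One caution on your structural gloss: the claim that the single-term decomposition is ``precisely the feature that fails once $m\geq 2$'' overstates the situation, since by Remark \ref{R:LuQiKeng} the Lu Qi-Keng question remains open for $\gamma\in(1,2)$ and for the intervals $\left(\frac 1k,\frac{2}{2k-1}\right)$, so the presence of several subkernels does not by itself produce zeroes; but as this remark is purely motivational, it does not affect the validity of your proof.
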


\begin{theorem}\label{T:LQK_K}
Let $k \ge 2$ be a positive integer.  The Bergman kernel $\B_k(z,w)$ has zeroes inside $\h_k \times \h_k$.
\end{theorem}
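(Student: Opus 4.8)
The plan is to specialize the explicit formula of the Corollary to the integer exponent $\gamma=k$, i.e.\ $m=k$, $n=1$, and then to locate a zero of the resulting rational function inside the domain. With $n=1$ one has $E_j=\lfloor j/k\rfloor=0$ for $0\le j\le k-1$, so Theorem \ref{T:subKernelCalculation} gives
\[
\B_k(z,w)=\frac{1}{k\pi^2}\cdot\frac{P(s,t)}{(1-t)^2(t-s^k)^2},
\]
where $s=z_1\bar w_1$, $t=z_2\bar w_2$, and
\[
P(s,t):=\sum_{j=0}^{k-1}\bigl[(j+1)+(k-j-1)t\bigr]\bigl[(j+1)t+(k-j-1)s^k\bigr]s^j .
\]
Here $P$ is a genuine polynomial (no fractional powers survive since $E_j=0$ and $n=1$), so the polarization off the diagonal in Theorem \ref{T:subKernelCalculation} is unambiguous. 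On $\h_k\times\h_k$ one has $|t|=|z_2||w_2|<1$ and $|s|^k=(|z_1||w_1|)^k<(|z_2||w_2|)=|t|$, so $t\neq1$ and $t\neq s^k$; hence the denominator never vanishes and the zeros of $\B_k$ coincide with the zeros of $P$ in the achievable region.

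Next I would record exactly which $(s,t)$ are achievable. Because the phases of $z_1,w_1,z_2,w_2$ are free, the pair $(s,t)=(z_1\bar w_1,z_2\bar w_2)$ ranges over precisely $\{(s,t):|s|^k<|t|<1\}$, and conversely every such $(s,t)$ is realized by some $(z,w)\in\h_k\times\h_k$ (take $|z_1|=|w_1|=|s|^{1/2}$ and $|z_2|=|w_2|=|t|^{1/2}$ with matching phases, or $z_1=w_1=0$ when $s=0$). Thus the problem reduces to producing $(s,t)$ with $P(s,t)=0$ and $|s|^k<|t|<1$.

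The main step is the slice $s=0$, where only the $j=0$ summand contributes: $P(0,t)=t\bigl[1+(k-1)t\bigr]$, which vanishes at $t=-\tfrac{1}{k-1}$. For $k\ge3$ this root satisfies $0<|t|=\tfrac{1}{k-1}<1$, so $\bigl(0,-\tfrac{1}{k-1}\bigr)$ lies in the achievable region and gives an interior zero; concretely $\B_k\bigl((0,z_2),(0,w_2)\bigr)=0$ whenever $z_2\bar w_2=-\tfrac{1}{k-1}$, e.g.\ $z_2=-w_2=(k-1)^{-1/2}$, a point strictly interior to $\h_k$ for $k\ge3$. This settles every $k\ge3$.

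The remaining and most delicate case is $k=2$, where the slice root $t=-\tfrac{1}{k-1}=-1$ lands on the boundary $|t|=1$ and must be pushed inward. Here $P(s,t)=(1+t)s^2+4ts+t(1+t)$ is a quadratic in $s$ whose two roots multiply to $t$. Choosing $t\in(-1,0)$ makes the discriminant $4t\bigl[4t-(1+t)^2\bigr]$ positive, so the roots are real of opposite sign; since their sum $-4t/(1+t)$ is nonzero they have distinct moduli, whence the smaller-modulus root $s_0$ obeys $|s_0|^2<|s_0s_1|=|t|<1$. This produces an interior zero for $k=2$ and completes the proof. The principal obstacle throughout is the realizability bookkeeping—verifying that a prospective algebraic zero of $P$ genuinely satisfies the strict inequality $|s|^k<|t|<1$—which is exactly what forces the separate, off-slice treatment of $k=2$.
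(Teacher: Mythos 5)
Your proposal is correct, and while its main step coincides with the paper, its treatment of $k=2$ is genuinely different. For $k\ge 3$ your argument is exactly the paper's proof of Theorem \ref{T:LQKgamma} specialized to $\gamma=k$ (with $m=k$, $n=1$, $E_j=0$): restrict to the variety $s=0$, where every subkernel $K_j$ with $j\ge 1$ vanishes and the surviving factor $P(0,t)=t\left[1+(k-1)t\right]$ has the root $t=-(k-1)^{-1}$, which is interior precisely when $k>2$; your realizability bookkeeping — that $(s,t)=(z_1\bar w_1,z_2\bar w_2)$ ranges over exactly $\{|s|^k<|t|<1\}$, on which the denominator $(1-t)^2(t-s^k)^2$ never vanishes — is left implicit in the paper but is correct and worth making explicit. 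Where you diverge is $k=2$: the paper does not reprove this case at all. Theorem \ref{T:LQK_K} is imported from \cite{Edh16}, where it was obtained by Bell's transformation formula under proper maps, and the present paper merely records one unexplained point, $\B_2\left(\left(\tfrac{i}{\sqrt 2},\tfrac{\sqrt 7+i}{4}\right),\left(\tfrac{-i}{\sqrt 2},\tfrac{\sqrt 7-i}{4}\right)\right)=0$, as ``easy to check.'' You instead give a self-contained existence argument: with $P(s,t)=(1+t)s^2+4ts+t(1+t)$ and $t\in(-1,0)$, the discriminant $4t\left[4t-(1+t)^2\right]$ is positive, the two real roots have product $t$ and nonzero sum, hence distinct moduli, so the smaller-modulus root satisfies $|s_0|^2<|s_0 s_1|=|t|<1$ and lies in the achievable region (and $t\ne s_0^2$ since $t<0\le s_0^2$, so the denominator is safe). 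This buys independence from \cite{Edh16} and yields a one-parameter family of interior zeros of $\B_2$ rather than a single numerically verified point; the two treatments are consistent, since the paper's point corresponds to $s=-\tfrac 12$, $t=\tfrac{3+\sqrt 7\, i}{8}$, which is indeed a root of your $P$.
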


Using the explicit form of the orthonormal basis on $A^2\left(\h_\gamma\right)$, Theorem \ref{T:LQK_K} can be extended to non-integer exponents

\begin{theorem}\label{T:LQKgamma}
Let $\gamma\geq 2$.  The Bergman kernel $\B_{\gamma}(z,w)$ has zeroes inside $\h_{\gamma} \times \h_{\gamma}$.
\end{theorem}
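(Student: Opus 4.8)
The plan is to exploit the Reinhardt symmetry and realize a zero on the simplest possible slice, where the kernel collapses to a single Laurent series in one variable. First I would record that, since $\{z^\alpha:\alpha\in\ca^2_\gamma\}$ is an orthogonal basis, \eqref{E:BergmanInfiniteSum} gives
\[
\B_\gamma(z,w)=\sum_{\alpha\in\ca^2_\gamma}\frac{z^\alpha\,\overline{w}^{\,\alpha}}{c_{\gamma,\alpha}^2},
\]
converging normally on $\h_\gamma\times\h_\gamma$. On the diagonal this is a sum of positive terms, so $\B_\gamma(z,z)>0$ and any zero must be genuinely off-diagonal. Because the exponents $\alpha$ are integral, $\B_\gamma$ depends on $(z,w)$ only through $s=z_1\overline w_1$ and $t=z_2\overline w_2$, and the pairs $(s,t)$ arising from $(z,w)\in\h_\gamma\times\h_\gamma$ are exactly those with $|s|^\gamma<|t|<1$.

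The key step is to kill the $z_1$-dependence entirely by setting $z_1=w_1=0$. Only the multi-indices with $\alpha_1=0$ then survive, and the kernel collapses to the one-variable Laurent series
\[
\B_\gamma\big((0,z_2),(0,w_2)\big)=\sum_{\alpha_2\ge -1}\frac{(z_2\overline w_2)^{\alpha_2}}{c_{\gamma,(0,\alpha_2)}^2}=:P_0(z_2\overline w_2),
\]
where the range $\alpha_2\ge-1$ and the values $c_{\gamma,(0,\alpha_2)}^2=\gamma\pi^2\big/\big(1+\gamma(\alpha_2+1)\big)$ come from Lemma \ref{L:gammaAllowableMultiIndex}. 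Summing the two resulting geometric-type series — note the presence of the admissible monomial $z_2^{-1}$, corresponding to $\alpha_2=-1$ — I expect the closed form
\[
P_0(t)=\frac{1+(\gamma-1)t}{\gamma\pi^2\,t\,(1-t)^2}.
\]

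Finally I would locate the zero. The denominator of $P_0$ is nonvanishing for $0<|t|<1$, while the numerator vanishes precisely at $t_0=-\tfrac{1}{\gamma-1}$. For $\gamma>2$ one has $|t_0|=\tfrac{1}{\gamma-1}<1$, so $t_0$ is realizable: taking $z_2=\tfrac{1}{\sqrt{\gamma-1}}$ and $w_2=-\tfrac{1}{\sqrt{\gamma-1}}$ gives $|z_2|,|w_2|<1$ and $z_2\overline w_2=t_0$, hence $z=(0,z_2)$ and $w=(0,w_2)$ lie in $\h_\gamma$ and satisfy $\B_\gamma(z,w)=P_0(t_0)=0$. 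This produces an explicit interior off-diagonal zero for every $\gamma>2$. The endpoint $\gamma=2$ is an integer and is already covered by Theorem \ref{T:LQK_K}.

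The genuine content here is not a hard estimate but the realizability constraint $|t|<1$: the candidate zero $t_0=-1/(\gamma-1)$ of the one-variable section lands strictly inside the unit disc exactly when $\gamma>2$, which is why this slice argument stops precisely at $\gamma=2$ and the endpoint must be imported from the integer case. The main thing to get right is therefore the bookkeeping in Lemma \ref{L:gammaAllowableMultiIndex} — in particular that $\alpha_2=-1$ is admissible when $\alpha_1=0$, since it is this single negative power of $z_2$ that turns $P_0$ into a rational function whose numerator changes sign, rather than a strictly positive power series. I would also verify that normal convergence legitimizes evaluating the series term-by-term at the interior point $z_1=w_1=0$, and that the produced zero is off-diagonal, which it is since $z_2\neq w_2$.
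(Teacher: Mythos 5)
Your proposal is correct and is essentially the paper's own argument: the paper likewise restricts to the slice $\{z_1=w_1=0\}$, obtains the closed form $\frac{1}{\gamma\pi^2}\cdot\frac{1+(\gamma-1)t}{t(1-t)^2}$, places the zero at $t_0=-(\gamma-1)^{-1}$ via points of modulus $(\gamma-1)^{-1/2}$ (the paper uses $\pm i(\gamma-1)^{-1/2}$, yours differ by a unimodular factor), and imports $\gamma=2$ from Theorem \ref{T:LQK_K} since $t_0$ lands on the boundary there. The only cosmetic difference is that you sum the restricted series directly for all $\gamma>2$, whereas the paper treats rational $\gamma$ through the subkernel decomposition \eqref{E:bergman_decomp} (noting $K_j$ vanishes on $\{s=0\}$ for $j\geq 1$) before running the same direct computation in the irrational case.
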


\begin{proof}
First let $\gamma = \frac mn > 2$ be rational, $\gcd(m,n)=1$.  Write, as before, $s=z_1\bar w_1$ and $t = z_2\bar{w}_2$.

For $j = 1,\dots, m-1$, the positive exponent of $s$ in \eqref{E:formulaK_j} shows that  
$$K_j((0,z_2),(0,w_2)) = 0.$$
Thus, all but the sub-Bergman kernel $K_0$ in the decomposition \eqref{E:bergman_decomp} vanish identically on the variety $\{s=0\}$.  For this sub-Bergman kernel,
\begin{equation*}
K_0((0,z_2),(0,w_2)) = \frac{n}{m\pi^2}\cdot \frac{1 + (\frac{m}{n}-1)t}{t(1-t)^2}.
\end{equation*}
The numerator obviously vanishes when $t = -(\frac{m}{n} -1)^{-1}$.  It is easily checked that  $$(z^0,w^0) = \left((0,i(\frac{m}{n} -1)^{-1/2}),(0,-i(\frac{m}{n} -1)^{-1/2})\right) \in \h_{m/n} \times \h_{m/n},$$ 
and that $\B_{m/n}(z^0,w^0) = 0$, so this case is complete.  

Now let $\gamma > 2$ be irrational.  $A^2(\h_{\gamma})$ does not admit the finite decomposition \eqref{E:bergman_decomp}, however a similar simplification to that used above occurs when $\B_{\gamma}(z,w)$ is restricted to the
variety $\{s=0\}$. Starting with

\begin{equation*}
\B_{\gamma}(z,w) = \frac{1}{\gamma\pi^2} \sum_{\alpha \in \ca_{\gamma}^2}[(\alpha_1+1)^2 + \gamma (\alpha_1+1)(\alpha_2+1)]s^{\alpha_1}t^{\alpha_2},
\end{equation*}
it follows that
\begin{align*}
\B_{\gamma}((0,z_2),(0,w_2)) &= \frac{1}{\gamma\pi^2} \sum_{\alpha_2=-1}^{\infty} t^{\alpha_2} + \frac{1}{\pi^2} \sum_{\alpha_2=-1}^{\infty} (\alpha_2+1)t^{\alpha_2} \\
	&= \frac{1}{\gamma\pi^2} \cdot \frac{1+(\gamma-1)t}{t(1-t)^2}.  
\end{align*}
Recalling that $\gamma > 2$, it is checked as before that $$\left(z^0,w^0\right) = \left((0,i(\gamma -1)^{-1/2}),(0,-i(\gamma -1)^{-1/2})\right) \in \h_{\gamma} \times \h_{\gamma}.$$ Since $\B_{\gamma}\left(z^0,w^0\right) = 0$ by inspection, this case is complete as well.

The case $\gamma=2$ is covered by Theorem \ref{T:LQK_K}. However the point obtained above for $\gamma >2$ does not work for $\B_2$, since $\left((0,i(\gamma -1)^{-1/2}),(0,-i(\gamma -1)^{-1/2})\right)$ lies on the boundary of $\h_{\gamma} \times \h_{\gamma}$ when $\gamma = 2$. But it is easy to check that $\B_2\left((\frac{i}{\sqrt{2}},\frac{\sqrt{7}+i}{4}),(\frac{-i}{\sqrt{2}},\frac{\sqrt{7}-i}{4})\right) = 0$ and this point lies inside $\h_2 \times \h_2$.

\end{proof}

\begin{remark}\label{R:LuQiKeng} In order to answer the Lu Qi-Keng question for all $\h_{\gamma}$, $\gamma >0$, we use the fact that a vanishing Bergman kernel is a biholomorphic invariant.  The map $\Psi(z) = (z_1z_2,z_2)$ is a biholomorphism of $\h_{\gamma}$ onto $\h_{\gamma/(\gamma+1)}$ with inverse $\psi(z_1,z_2)=(\frac{z_1}{z_2},z_2)$.  Applying $\Psi$ recursively yields the following chain of equivalent domains:

\begin{equation*}
\begin{matrix}
&\Psi&&\Psi&&\Psi \qquad \Psi&&\Psi&\\
\h_{\gamma}&\rightleftharpoons &\h_{\gamma/(1+\gamma)}&\rightleftharpoons&\h_{\gamma/(1+2\gamma)}&\rightleftharpoons \cdots \rightleftharpoons &\h_{\gamma/(1+k\gamma)} &\rightleftharpoons &\cdots \\
&\psi&&\psi&&\psi \qquad \psi&& \psi &
\end{matrix}
\end{equation*}

A similar chain of domains appeared in sections 3.2 and 4.1 of \cite{Edh16}.  Theorem \ref{T:LQKgamma} now implies the Bergman kernel of $\h_{\gamma}$ has zeroes for $ \gamma \in [\frac 23,1) \cup [\frac 25,\frac 12) \cup \cdots \cup [\frac{2}{2k+1},\frac{1}{k}) \cup \cdots$, $k \in \Z^+$.  The right end points appearing in this union are all sharp, since Theorem \ref{T:LQK_1/K} says the Bergman kernel is non-vanishing for $\gamma=\frac 1k$, $k\in\Z^+$.  

The remaining open case is for $\gamma\in (1,2) \cup (\frac 12,\frac 23) \cup \cdots \cup (\frac{1}{k},\frac{2}{2k-1}) \cup \cdots$, $k \in \Z^+$.  By considering the same chain of biholomorphisms above, it is sufficient to investigate the question for $\gamma\in (1,2)$.
\end{remark}


\section{The rational case: $L^p$ boundedness}\label{S:PositiveLpRational}

\subsection{Type-$A$ operators on $\h_{m/n}$}\label{SS:TypeA}

Our proof of $L^p$ boundedness of $\mathbf{B}_{m/n}$ does not use holomorphicity of the Bergman kernel. It only involves size estimates of $\B_{m/n}(z,w)$ so it also applies to a general class of
operators whose kernels satisfy these estimates. It turns out that the exponent, $A$, of the euclidean distance from $z\in\h_{m/n}$ to the origin in these estimates determines the range of $p$ for $L^p$ boundedness.  This motivates the definition below.

 If $\Omega \subset \C^n$ is a domain and $K$ is an a.e. positive, measurable function on $\Omega \times \Omega$, let $\ck$ denote the integral operator with kernel $K$:

\begin{equation}\label{E:op_kernel}
\ck(f)(z) = \int_{\Omega} K(z,w) f(w) \,dV(w).
\end{equation}

\begin{definition} For $A\in\R^+$, call $\ck$ an operator of type-$A$ on $\h_{m/n}$ if its kernel satisfies
\begin{equation}\label{E:typeA}
\left| K\left(z_1,z_2, w_1,w_2\right)\right|\lesssim \frac{\left| z_2\bar w_2\right|^A}{\left|1-z_2\bar w_2\right|^2\, \left| z_2^n\bar w_2^n-z_1^m\bar w_1^m\right|^2},
\end{equation}
for a constant independent of $(z,w)\in \h_{m/n}\times\h_{m/n}$.
\end{definition}

The basic $L^p$ mapping result is the following

\begin{proposition}\label{P:typeA_mapping}
If $\ck$ is an operator of type-$A$ on $\h_{m/n}$, then $\ck:L^p\left(\h_{m/n}\right)\longrightarrow L^p\left(\h_{m/n}\right)$ boundedly if
\begin{equation}\label{E:typeA_range}
\frac{2n+2m}{Am+2n+2m-2nm} < p <\frac{2n+2m}{2nm -Am},
\end{equation}
whenever both denominators in \eqref{E:typeA_range} are positive and ${Am+2n+2m-2nm} > {2nm -Am}$.
\end{proposition}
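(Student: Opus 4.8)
The plan is to deduce boundedness from Schur's test, using the Reinhardt symmetry of the bound \eqref{E:typeA} to reduce everything to one-variable weighted integrals. Since the right-hand side of \eqref{E:typeA} is symmetric in $z$ and $w$, the two Schur inequalities coincide after interchanging $p$ with its conjugate exponent $p'$. Thus I look for a single positive radial weight $h(z)=h(|z_1|,|z_2|)$ satisfying
\[
\int_{\h_{m/n}}\frac{|z_2\bar w_2|^A\,h(w)^s}{|1-z_2\bar w_2|^2\,|z_2^n\bar w_2^n-z_1^m\bar w_1^m|^2}\,dV(w)\ \lesssim\ h(z)^s
\]
for the two values $s=p'$ and $s=p$; Schur's lemma then yields $\ck\colon L^p\to L^p$. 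Because only the size of the kernel is controlled, there is no holomorphic cancellation available, so the shape of $h$ is forced entirely by the geometry of the two singular faces $\{|z_2|=1\}$ and the edge $\{|z_1|^m=|z_2|^n\}$, which meet at the distinguished point $(0,0)$.

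First I would carry out the angular reduction. Writing $w_j=\rho_j e^{i\phi_j}$, the factor $|1-z_2\bar w_2|^{-2}$ depends only on $\phi_2$, while $|z_2^n\bar w_2^n-z_1^m\bar w_1^m|^{-2}$ depends on $\phi_1,\phi_2$; integrating out $\phi_1$ first and then $\phi_2$ via the identities $\int_0^{2\pi}|1-\rho e^{i\theta}|^{-2}\,d\theta=2\pi(1-\rho^2)^{-1}$ and $\int_0^{2\pi}|a-be^{i\theta}|^{-2}\,d\theta=2\pi|a^2-b^2|^{-1}$ replaces $|1-z_2\bar w_2|^{-2}$ by $(1-|z_2|^2\rho_2^2)^{-1}$ and $|z_2^n\bar w_2^n-z_1^m\bar w_1^m|^{-2}$ by $(|z_2|^{2n}\rho_2^{2n}-|z_1|^{2m}\rho_1^{2m})^{-1}$. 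The problem becomes a two-dimensional integral over the Reinhardt shadow. The substitution $\rho_1=\rho_2^{n/m}\lambda$, $\lambda\in[0,1)$, flattens the shadow to the unit square, sends the edge to $\{\lambda=1\}$ and keeps the outer face at $\{\rho_2=1\}$; the integral then splits into a $\lambda$-integral controlling the edge and a $\rho_2$-integral controlling both $\{|z_2|=1\}$ and the origin.

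Guided by this, I take
\[
h(z)=|z_2|^{\beta}\,(1-|z_2|^2)^{-\delta}\,(1-\mu^2)^{-\epsilon},\qquad \mu=\frac{|z_1|^m}{|z_2|^n},
\]
and evaluate the two resulting integrals by beta/Forelli--Rudin asymptotics. Under the substitution one checks $\mu(w)=\lambda^m$, so the $\lambda$-integral has the form $\int_0^1\lambda(1-\lambda^{2m})^{-\epsilon s}(1-\mu^2\lambda^{2m})^{-1}\,d\lambda$; for $0<\epsilon s<1$ this converges and grows like $(1-\mu^2)^{-\epsilon s}$, exactly reproducing the edge factor of $h(z)^s$, with $\epsilon$ free. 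Similarly the $\rho_2$-integral reproduces $(1-|z_2|^2)^{-\delta s}$ whenever $0<\delta s<1$; this mirrors the unit disc and is likewise non-binding. What remains after these cancellations is $|z_2|^{A-2n}$ times the value of the $\rho_2$-integral near the origin. Two constraints then survive: convergence of the $\rho_2$-integral at $\rho_2=0$ forces $\beta s>-A+2n-\tfrac{2n}{m}-2$, while matching the power of $|z_2|$ as $z\to(0,0)$ forces $\beta s\le A-2n$. Requiring a single $\beta$ to meet both bounds for $s=p$ and $s=p'$ simultaneously is, after a short computation with $p'=p/(p-1)$, precisely the condition that $p$ lie in \eqref{E:typeA_range}; the interval is nonempty exactly when $Am+2n+2m-2nm>2nm-Am$.

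The hard part is this final bookkeeping, and in particular extracting sharp endpoints. The two auxiliary exponents $\delta,\epsilon$ can always be chosen small, so they impose no restriction; the entire finite range comes from the competition between integrability of the kernel at the origin (the lower bound on $\beta s$) and the decay the operator must reproduce there (the upper bound on $\beta s$), both governed by the numerator exponent $A$. This is the analytic reflection of the fact that the singularity responsible for the restricted range is the point $(0,0)$, where the two boundary faces meet, while the outer face $\{|z_2|=1\}$ behaves like the boundary of a disc and permits the full range. The delicate step is to track the borderline asymptotics of the radial integrals precisely enough that the admissible set of $p$ is exactly the open interval \eqref{E:typeA_range}, with both endpoints attained in the limit, rather than some smaller interval obtained from cruder bounds.
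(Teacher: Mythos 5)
Your proposal is correct and is essentially the paper's own argument: a Schur test with a weight comparable to a power of $h(z)=\left(|z_2|^{2n}-|z_1|^{2m}\right)\left(1-|z_2|^2\right)$ (your three-parameter family contains the paper's $h^{-\epsilon}$ via $\beta=-2n\epsilon$, $\delta=\epsilon$, and your two binding constraints $\beta s\le A-2n$ and $\beta s> 2n-A-\tfrac{2n}{m}-2$ for $s=p,p'$ are exactly the paper's requirement $\epsilon\in[a,b)$ with $a=1-\tfrac{A}{2n}$, $b=\tfrac{1}{2n}\left[A+\tfrac{2n}{m}-2n+2\right]$ fed into its Schur-lemma variant, Lemma \ref{L:SchursLemma}), while your explicit angular integrations and the substitution $\rho_1=\rho_2^{n/m}\lambda$ merely inline the paper's Proposition \ref{L:calculus1} and its change of variables $u=w_1^m/w_2^n$. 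One cosmetic caution: your second angular identity should read $\int_0^{2\pi}|a-be^{i\theta}|^{-2}\,d\theta=2\pi\left(|a|^2-|b|^2\right)^{-1}$ for $|b|<|a|$, not $2\pi|a^2-b^2|^{-1}$, which is in any case the form you actually use.
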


\begin{remark} (i) While seemingly complicated at first glance, the bounding terms on $p$ in \eqref{E:typeA_range} express the natural interplay between the exponent $A$ and the kind of singularity $b\h_{m/n}$ has at $(0,0)$. 
\smallskip

(ii) The exponents $A$ obtained for the sub-Bergman kernels will automatically satisfy the positivity conditions mentioned after \eqref{E:typeA_range}.
\smallskip

(iii) If $A\to 2n$, the bounding terms in  \eqref{E:typeA_range} tend to 1 and $\infty$ respectively. Thus, an operator of type-$2n$ on  $\h_{m/n}$ is $L^p$ bounded for all $1<p<\infty$. This holds for any $m\in\Z^+$.

(iv) The bounding terms in  \eqref{E:typeA_range} are conjugate H\" older exponents. If, in \eqref{E:typeA}, $|z_2\bar w_2|^A$ is replaced by $|z_2|^{c}|w_2|^d$ for $c\neq d$, this H\" older symmetry will be broken but a result similar to Proposition \ref{P:typeA_mapping} can be obtained
\end{remark}

Some preliminary results are needed before proving Proposition \ref{P:typeA_mapping}.

\subsubsection{An estimate on $\B_D$} Proving Proposition \ref{P:typeA_mapping} requires analyzing integrals over the domain $\h_{m/n}$. Since $\h_{m/n}$ is rotationally symmetric, 
a one-dimensional estimate on the Bergman kernel on the unit disc in $\C$ can be used to effectively estimate these two-dimensional integrals. 

The essential estimate below (without $|w|^{-\beta}$ in the integrand) has been re-discovered many times, see for instance \cite{ForRud74}, \cite{RudinFunctiontheory}, \cite{ZhuBergmanbook}, \cite{Chen14}.
The proofs in these sources use well-known, but non-trivial, asymptotic results to derive the estimate. A more elementary proof is presented here; this proof simplifies one given in \cite{EdhMcN16}.

\smallskip

\begin{proposition} \label{L:calculus1}
Let $D\subset\C$ be the unit disc, $\epsilon \in (0,1)$ and $\beta \in (-\infty,2)$.  

Then for $z\in D$,
$$\ci_{\epsilon,\beta}(z) := \int_D \frac{(1 - |w|^2)^{-\epsilon}}{|1 - z\bar{w}|^2}|w|^{-\beta} \, dV(w) \lesssim (1 - |z|^2)^{-\epsilon},$$
with constant independent of $z$.
\end{proposition}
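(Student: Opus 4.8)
The plan is to exploit the rotational symmetry of the integral and then reduce the whole estimate to a comparison of power–series coefficients. First I would observe that replacing $w$ by $e^{i\arg z}w$ leaves $dV$, $|w|$, and $1-|w|^2$ unchanged while turning $z\bar w$ into $|z|\bar w$, so $\ci_{\epsilon,\beta}(z)=\ci_{\epsilon,\beta}(|z|)$ and we may assume $z=r\in[0,1)$. Writing $w=\rho e^{i\theta}$ and expanding $|1-r\bar w|^{-2}=\sum_{j,l\ge 0}r^{j+l}\bar w^{j}w^{l}$, the angular integration annihilates every off-diagonal term and leaves a single power series in $r^2$:
\[
\ci_{\epsilon,\beta}(r)=2\pi\sum_{k=0}^{\infty}r^{2k}\int_0^1\rho^{2k+1-\beta}(1-\rho^2)^{-\epsilon}\,d\rho .
\]
The substitution $x=\rho^2$ identifies the radial integral with a Beta function, so setting $c_k:=\pi\,B\!\left(k+1-\tfrac{\beta}{2},\,1-\epsilon\right)$ gives $\ci_{\epsilon,\beta}(r)=\sum_{k\ge 0}c_k\,r^{2k}$. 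Here the hypotheses are used exactly: $\beta<2$ and $\epsilon<1$ guarantee that both Beta arguments are positive for every $k\ge 0$, so each $c_k$ is finite and positive.

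The target $(1-r^2)^{-\epsilon}$ is itself a positive power series, $(1-x)^{-\epsilon}=\sum_{k\ge 0}b_k x^k$ with $b_k=\dfrac{\epsilon(\epsilon+1)\cdots(\epsilon+k-1)}{k!}>0$. Since $c_k,b_k\ge 0$ and $x=r^2\in[0,1)$, the proposition reduces to the single termwise estimate $c_k\lesssim b_k$, uniformly in $k$: granting this, $\ci_{\epsilon,\beta}(r)=\sum_k c_k x^k\lesssim\sum_k b_k x^k=(1-r^2)^{-\epsilon}$, which is precisely the claimed bound with constant independent of $z$. This coefficient comparison is what lets us avoid the ``non-trivial asymptotic results'' mentioned after the statement: rather than invoking Stirling's formula to show $c_k\approx k^{\epsilon-1}\approx b_k$ separately, I compare the two sequences directly.

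The main obstacle, and essentially the only computation, is proving $c_k\lesssim b_k$ elementarily, and I would do this by comparing consecutive ratios instead of individual terms. Using $\Gamma(x+1)=x\Gamma(x)$ one gets the exact, pole-free expressions
\[
\frac{c_{k+1}}{c_k}=\frac{k+1-\frac{\beta}{2}}{k+2-\frac{\beta}{2}-\epsilon}=1-\frac{1-\epsilon}{k+2-\frac{\beta}{2}-\epsilon},\qquad
\frac{b_{k+1}}{b_k}=\frac{k+\epsilon}{k+1}=1-\frac{1-\epsilon}{k+1}.
\]
The two subtracted quantities differ by $O(k^{-2})$, so the ratio of ratios satisfies $\dfrac{c_{k+1}/c_k}{b_{k+1}/b_k}=1+O(k^{-2})$; hence the telescoping product $\dfrac{c_k}{b_k}=\dfrac{c_0}{b_0}\prod_{j<k}\dfrac{c_{j+1}/c_j}{b_{j+1}/b_j}$ converges as $k\to\infty$ because $\sum_k O(k^{-2})<\infty$. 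A convergent sequence of positive reals is bounded, which yields $c_k\lesssim b_k$ for all $k$ and finishes the argument. The only points needing care are bookkeeping: $\beta<2$ keeps $k+1-\tfrac{\beta}{2}>0$ so no factor in the product vanishes or changes sign (the finitely many smallest indices contribute a fixed finite constant and are harmless), and the implied constant in $O(k^{-2})$ depends only on the fixed parameters $\epsilon$ and $\beta$, so the resulting bound is uniform in $z$.
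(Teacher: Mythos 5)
Your proof is correct, but it takes a genuinely different route from the paper's. The paper stays entirely on the integral side: after dispatching $|z|\le\frac12$ by the trivial bound $|1-z\bar w|\ge\frac12$, it evaluates the angular integral $\int_0^{2\pi}\bigl(1-2r|z|\cos\theta+r^2|z|^2\bigr)^{-1}d\theta=2\pi\bigl(1-r^2|z|^2\bigr)^{-1}$ exactly by residues, and then estimates the resulting radial integral by splitting it at $r=|z|$. You instead pass to the series side: rotation invariance reduces to $z=r\in[0,1)$, term-by-term angular integration converts $\ci_{\epsilon,\beta}(r)$ into $\sum_k c_k r^{2k}$ with $c_k=\pi B\bigl(k+1-\frac{\beta}{2},\,1-\epsilon\bigr)$, and the proposition becomes the coefficient bound $c_k\lesssim b_k$ against the binomial series of $(1-r^2)^{-\epsilon}$. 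This is essentially the classical Forelli--Rudin scheme, which the paper explicitly set out to avoid because it normally rests on Stirling-type asymptotics for Gamma quotients; your telescoping-ratio device restores elementarity, and I checked it: both ratio sequences are of the form $1-(1-\epsilon)/(k+O(1))$, the ratio of ratios equals $\frac{(k+1-\frac{\beta}{2})(k+1)}{(k+2-\frac{\beta}{2}-\epsilon)(k+\epsilon)}$, whose numerator and denominator agree in their $k^2$ and $k$ coefficients, so it is indeed $1+O(k^{-2})$ with constant depending only on $\epsilon,\beta$, and every factor is positive precisely because $\beta<2$ and $0<\epsilon<1$. Two small points of bookkeeping you should make explicit: the interchange of summation and integration needs a one-line Tonelli justification (for fixed $r<1$ the double series is dominated by $(1-r|w|)^{-2}\le(1-r)^{-2}$ against an integrable weight), and the claim $\ci_{\epsilon,\beta}(z)=\ci_{\epsilon,\beta}(|z|)$ deserves the one-line rotation argument you sketch. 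As for what each approach buys: the paper's computation is a direct real-variable estimate requiring no expansion of the kernel, which is why it transfers verbatim to the type-$A$ operators of subsection \ref{SS:TypeA}, where only size bounds on the kernel are assumed; your argument makes the role of the hypotheses transparent (positivity of both Beta arguments is exactly $\beta<2$ and $\epsilon<1$), and since the same convergent product also bounds $b_k/c_k$, it yields $c_k\approx b_k$ and hence the two-sided estimate $\ci_{\epsilon,\beta}(z)\approx(1-|z|^2)^{-\epsilon}$ at no extra cost, showing the proposition is sharp.
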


\begin{proof}  Since $|w|^{-\beta}\leq 1$ if $\beta\in (-\infty, 0)$, this range of $\beta$ reduces to establishing the estimate for $\beta =0$. From now on, $\beta \in [0,2)$.

Consider first an arbitrary $|z| \le \frac{1}{2}$.  Then $|1 - z\bar{w}| \ge 1 - |z\bar{w}| \ge \frac{1}{2}$, so 

\begin{align*}
\ci_{\epsilon,\beta}(z) &\le 4 \int_{D} (1 - |w|^2)^{-\epsilon}|w|^{-\beta} \, dV(w) \\
	&= 4\pi \int_0^1 (1-u)^{-\epsilon} u^{-\beta/2} \, du < \infty.
\end{align*}
Since this bound is independent of $z$, the desired estimate holds.

Next consider $|z| > \frac{1}{2}$. Set $c = \frac{1}{2|z|}$ and split the integral:
\begin{align*}
\ci_{\epsilon,\beta}(z) &= \int_{|w| \le c} \frac{(1 - |w|^2)^{-\epsilon}}{|1 - z\bar{w}|^2}|w|^{-\beta} \, dV(w) + \int_{|w| > c} \frac{(1 - |w|^2)^{-\epsilon}}{|1 - z\bar{w}|^2}|w|^{-\beta} \, dV(w) \\
&:= I_1 + I_2.
\end{align*}
For $I_1$,  $|1 - z\bar{w}| \ge 1 - |z\bar{w}| \ge \frac{1}{2}$. Hence

\begin{align*}
I_1 \le 4 \int_{|w| \le c} (1 - |w|^2)^{-\epsilon}|w|^{-\beta} \, dV(w) 
	< 4 \int_{D} (1 - |w|^2)^{-\epsilon}|w|^{-\beta} \, dV(w) < \infty.
\end{align*}
Thus $I_1$ satisfies the required estimate.  

It remains to show that $I_2$ does too.  Since $\frac{1}{2} < |z| < 1$, obviously $\frac{1}{2} < c < 1$ and consequently  $\frac{1}{2} < |w| < 1$ throughout $I_2$.  For $\beta \in [0,2)$, it follows that $1 \le |w|^{-\beta} < 4$.  Thus
\[
I_2 \le 4 \int_{|w| > c} \frac{(1 - |w|^2)^{-\epsilon}}{|1 - z\bar{w}|^2} \, dV(w).
\]

Now
\begin{align}\label{E:adhoc1}
\int_{|w| > c} \frac{(1 - |w|^2)^{-\epsilon}}{|1 - z\bar{w}|^2} \, dV(w) 
	&= \int_c^1 r(1-r^2)^{-\epsilon} \left[ \int_0^{2\pi} \frac{d\theta}{1 - 2r|z|\cos{\theta}+ r^2|z|^2} \right] dr.
\end{align}
Evaluation of the integral in brackets may be done by residue calculus.  Let $a = 1+r^2|z|^2$, $b= 2r|z|$, $w=e^{i\theta}$ and $\Gamma$ denote the unit circle.
\begin{align*}
\int_0^{2\pi} \frac{d\theta}{1 - 2r|z|\cos{\theta}+ r^2|z|^2}
	&= \int_0^{2\pi} \frac{d\theta}{a - b\cos{\theta}}\\
	&= \frac{1}{i}\int_{\Gamma} \frac{w^{-1} dw}{a - b(w + w^{-1})/2}\\
	&= -\frac{2}{i}\int_{\Gamma} \frac{dw}{bw^2 -2aw + b}.
\end{align*}

The polynomial in the denominator has two roots, only one of which is contained in the unit circle.  Indeed, $a > b$ and
\begin{equation*}
\left| \frac{a- \sqrt{a^2-b^2}}{b} \right| = \left| \frac{b}{a+\sqrt{a^2-b^2}} \right| < 1.
\end{equation*}
Denote this root by $\zeta$.  Using the residue theorem and L'Hospital's rule,
\begin{align*}
-\frac{2}{i}\int_{\Gamma} \frac{dw}{bw^2 -2a + b} &= -4\pi \lim_{w \to \zeta} (w - \zeta)\cdot \frac{1}{bw^2- 2aw + b}\\
&= -\frac{2\pi}{b\zeta-a}\\
&= \frac{2\pi}{\sqrt{a^2-b^2}} = \frac{2\pi}{(1-r^2|z|^2)}.
\end{align*}

Therefore, returning to \eqref{E:adhoc1},
\begin{equation*}
I_2 \lesssim \int_c^1 r(1-r^2)^{-\epsilon} (1 - r|z|)^{-1} \, dr.
\end{equation*}
A trivial over-estimate of $I_2$ now yields the desired estimate:
\begin{align*}
I_2 &\lesssim \int_0^{|z|} r(1-r^2)^{-\epsilon} (1 - r|z|)^{-1} \, dr + \int_{|z|}^1 r(1-r^2)^{-\epsilon} (1 - r|z|)^{-1} \, dr \\
	&:= J_1 + J_2
\end{align*}
Since $0 \le r \le |z|$, it follows that
\begin{align*}
J_1 < \int_0^{|z|} r(1-r|z|)^{-\epsilon-1} \, dr &= -\frac{1}{|z|} \int_1^{1-|z|^2} u^{-\epsilon-1} \, du \\
	&\lesssim (1 - |z|^2)^{-\epsilon}.
\end{align*}
The fact that $r \le 1$ implies
\begin{align*}
J_2 < (1 - |z|)^{-1} \int_{|z|}^1 r(1 - r^2)^{-\epsilon} \, dr &\lesssim (1 - |z|)^{-1} (1 - |z|^2)^{1 - \epsilon} \\
	&\lesssim (1 - |z|^2)^{-\epsilon}.
\end{align*}
Together, these estimates show $I_2 \lesssim (1 - |z|^2)^{-\epsilon}$, which completes the proof.
\end{proof} 

\subsubsection{An extension of Schur's lemma} The sub-Bergman kernels $K_j$ are not uniformly in $L^1\left(\h_{m/n}\right)$, i.e., there is no constant {\it independent of $z$} such that
$$\int_{\h_{m/n}} \left| K_j(z,w)\right|\, dV(w) \leq C.$$
See \eqref{E:SubkernelEstimate} below.  This prevents a direct application of H\" older's inequality from implying $L^p$ boundedness of $\ck_j$. 

A variant of Schur's lemma, proved in \cite{EdhMcN16}, will instead be used to prove $L^p$ boundedness. The difference between this
result and Schur's classical lemma (see, e.g., \cite{McNSte94}) is the explicit relationship between the range of exponents of the test function $h$ and the range of $p$ for which $L^p$ boundedness can be concluded.

\begin{lemma}[Version of Schur's Lemma \cite{EdhMcN16}]\label{L:SchursLemma}  Let $\Omega \subset \C^n$, $K$ and $\ck$ associated via \eqref{E:op_kernel}. 

Suppose there exists a positive auxiliary function $h$ on $\Omega$, and numbers $0 < a < b$ such that for all $\epsilon \in [a,b)$, the following estimates hold:

\begin{itemize}
\item[(i)]$\ck(h^{-\epsilon})(z)  \lesssim h(z)^{-\epsilon}$  and
\item[(ii)]$\ck(h^{-\epsilon})(w)  \lesssim h(w)^{-\epsilon},$
\end{itemize}
with constants independent of $z, w\in\Omega$.

Then $\ck$ is a bounded operator on $L^p(\Omega)$, for all $p \in (\frac{a+b}{b},\frac{a+b}{a})$.
\end{lemma}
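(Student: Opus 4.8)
The plan is to run a weighted Schur test, using the auxiliary function $h$ as the Schur weight and exploiting the fact that the hypotheses hold for a whole interval $[a,b)$ of exponents rather than a single one. Fix $p$ in the asserted range and let $p' = p/(p-1)$ be its conjugate exponent. The idea is to split the kernel as $K = K^{1/p'}\cdot K^{1/p}$ and insert a compensating power of $h$, so that H\"older's inequality in the $w$-variable separates the estimate into one factor governed by hypothesis (i) and a remainder that can be integrated against $|f|^p$.

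\textbf{Main computation.} Concretely, for $f\in L^p(\Omega)$ and a parameter $\alpha$ to be chosen, I would write
\[
|\ck f(z)| \le \int_\Omega \big(K(z,w)^{1/p'}h(w)^{\alpha}\big)\big(K(z,w)^{1/p}h(w)^{-\alpha}|f(w)|\big)\,dV(w),
\]
and apply H\"older with exponents $p',p$. The first factor becomes $\big(\int_\Omega K(z,w)h(w)^{\alpha p'}\,dV(w)\big)^{1/p'}$; setting $\alpha p' = -\epsilon_1$ with $\epsilon_1\in[a,b)$, hypothesis (i) bounds it by $h(z)^{\alpha}$. Raising the resulting inequality to the $p$th power, integrating in $z$, and applying Fubini (everything is nonnegative), the surviving inner integral is $\int_\Omega K(z,w)h(z)^{\alpha p}\,dV(z)$. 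Writing $\alpha p = -\epsilon_2$, this is precisely the integral controlled by hypothesis (ii) (integration in the first variable) with exponent $\epsilon_2$, so it is $\lesssim h(w)^{\alpha p}$; the two powers of $h(w)$ then cancel against the $h(w)^{-\alpha p}$ already present, leaving $\|\ck f\|_p^p \lesssim \|f\|_p^p$.

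\textbf{The range of $p$.} The one genuinely quantitative point, and the only place where the interval $\big(\tfrac{a+b}{b},\tfrac{a+b}{a}\big)$ enters, is the compatibility of the two exponent choices. Since $\alpha = -\epsilon_1/p'$, the second exponent is forced to be $\epsilon_2 = -\alpha p = \epsilon_1(p-1)$. Thus an admissible $\alpha$ exists precisely when one can pick $\epsilon_1$ with both $\epsilon_1\in[a,b)$ and $\epsilon_1(p-1)\in[a,b)$, i.e. when $[a,b)\cap[\tfrac{a}{p-1},\tfrac{b}{p-1})$ is nonempty. I would check that this intersection is nonempty exactly when $a(p-1)<b$ and $a<b(p-1)$, which rearrange to $p<\tfrac{a+b}{a}$ and $p>\tfrac{a+b}{b}$ respectively, giving the claimed range.

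\textbf{Expected obstacle.} I expect no serious difficulty: the argument is a bookkeeping-heavy but routine H\"older/Fubini computation. The care needed is (a) matching the inserted weight $\alpha$ to both hypotheses simultaneously, so that a single choice of $\epsilon_1$ feeds (i) while the correlated $\epsilon_2=\epsilon_1(p-1)$ feeds (ii) and both remain in $[a,b)$, and (b) verifying the range computation above, including that the half-open nature of $[a,b)$ does not alter the open interval of $p$ obtained.
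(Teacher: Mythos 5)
Your proof is correct and follows essentially the same route as the paper's: the lemma is quoted from \cite{EdhMcN16}, where it is proved by exactly this weighted Schur test --- splitting $K=K^{1/p'}K^{1/p}$, inserting powers of $h$, applying H\"older and Tonelli, and tying the two exponents by $\epsilon_2=\epsilon_1(p-1)$ so that the compatibility condition $[a,b)\cap[\tfrac{a}{p-1},\tfrac{b}{p-1})\neq\varnothing$ yields precisely $p\in\left(\frac{a+b}{b},\frac{a+b}{a}\right)$. You also correctly interpreted hypothesis (ii) as control of the kernel integrated in the \emph{first} variable, i.e. $\int_\Omega K(z,w)h(z)^{-\epsilon}\,dV(z)\lesssim h(w)^{-\epsilon}$, which is the intended meaning even though (i) and (ii) as printed differ only in the name of the free variable.
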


As with other versions of Schur's lemma, the inherent advantage of Lemma \ref{L:SchursLemma} is the latitude of choosing the auxiliary function $h$. 

\subsubsection{The auxiliary function; proof of Proposition \ref {P:typeA_mapping}} On the power-generalized Hartogs triangle $\h_{m/n}$, define 

\begin{equation}\label{E:aux_function}
h(z) = \left(|z_2|^{2n} - |z_1|^{2m}\right)(1 - |z_2|^2).
\end{equation}
This function (essentially) measures the distance of $z \in \h_{m/n}$ to $b \h_{m/n}$.

\begin{proof}[Proof of Proposition \ref {P:typeA_mapping}] Let $\ck$ be an operator of type-$A$ on $\h_{m/n}$; assume \newline $Am+2n+2m-2nm, 2nm -Am>0 $ and ${Am+2n+2m-2nm} > {2nm -Am}$.

Let $\epsilon >0$ be momentarily unrestricted; restrictions on $\epsilon$ will emerge shortly. From \eqref{E:typeA}

\begin{align}\label{E:adhoc2}
\ck\left(h^{-\epsilon}\right)(z) &\lesssim \int_{\h_{m/n}} \frac{|z_{2}\bar{w}_{2}|^A (|w_2|^{2n} - |w_1|^{2m})^{-\epsilon}(1 - |w_2|^2)^{-\epsilon}}{|1-z_{2}\bar{w}_{2}|^2|z_{2}^n\bar{w}_{2}^n-z_{1}^m\bar{w}_{1}^m|^2}\,dV(w)\notag \\
	&= \int_{D^*} \frac{|z_{2}\bar{w}_{2}|^A (1 - |w_2|^2)^{-\epsilon}}{|1-z_{2}\bar{w}_{2}|^2} \left[ \int_{W} \frac{(|w_2|^{2n} - |w_1|^{2m})^{-\epsilon}}{|z_{2}^n\bar{w}_{2}^n-z_{1}^m\bar{w}_{1}^m|^2} \, dV(w_1)\right] dV(w_2).
\end{align}
Here $D^* = \{w_2:0 < |w_2| < 1\}$ and the region $W = \{w_1 : |w_1| < |w_2|^{n/m}\}$, where $w_2$ is considered fixed.  Denote the integral in brackets by $I$. Then 

\begin{align*}
I = \frac{1}{|z_2|^{2n} |w_2|^{2n + 2n\epsilon}} \int_{W} \left(1 - \left|\frac{w_1^m}{w_2^n}\right|^2 \right)^{-\epsilon} \left| 1 - \frac{z_1^m\bar{w}_1^m}{z_2^n\bar{w}_2^n}\right|^{-2}  dV(w_1).
\end{align*}

Make the substitution $u = \frac{w_1^m}{w_2^n}$.  This transformation sends $W$ to $m$ copies of $D$, the unit disc in the $u$-plane.  Proposition \ref{L:calculus1} yields

\begin{align*}
I &= \frac{|w_2|^{2n/m - 2n - 2n\epsilon}}{m |z_2|^{2n}} \int_{D} \frac{(1 - |u|^2)^{-\epsilon}}{\left|1 - z_1^m z_2^{-n} \bar{u}\right|^2} \cdot |u|^{2/m -2} \, dV(u) \\
	&\lesssim \frac{|w_2|^{2n/m -2n -2n\epsilon}}{|z_2|^{2n}} \left(1 - \left| \frac{z_1^m}{z_2^n}\right|^2 \right)^{-\epsilon} \\
	&=  \frac{|w_2|^{2n/m -2n -2n\epsilon}}{|z_2|^{2n-2n\epsilon}} \left(|z_2|^{2n} - |z_1|^{2m}\right)^{-\epsilon}.
\end{align*}

Returning to \eqref{E:adhoc2}, we have

\begin{equation*}
\ck\left(h^{-\epsilon}\right)(z) \lesssim  |z_2|^{A +2n\epsilon - 2n} \left(|z_2|^{2n} - |z_1|^{2m}\right)^{-\epsilon} \int_{D^*} \frac{(1 - |w_2|^2)^{-\epsilon}}{|1-z_{2}\bar{w}_{2}|^2} |w_2|^\beta \, dV(w_2),
\end{equation*}
where $\beta= A+2n/m - 2n - 2n\epsilon$. This will be favorably estimated by Proposition \ref{L:calculus1} if $\beta >-2$. That is, if

\begin{equation}\label{E:epsilon1}
\epsilon < \frac 1{2n}\left[ A+\frac{2n}m -2n +2\right],
\end{equation}
then

\begin{align*}
\ck\left(h^{-\epsilon}\right)(z) &\lesssim  |z_2|^{A +2n\epsilon - 2n} \left(|z_2|^{2n} - |z_1|^{2m}\right)^{-\epsilon} (1 - |z_2|^2)^{-\epsilon} \\
&= |z_2|^{A +2n\epsilon - 2n}\cdot h(z)^{-\epsilon}.
\end{align*}
In order for the first factor in this expression to be bounded, the exponent must be non-negative, i.e.

\begin{equation}\label{E:epsilon2}
\epsilon\geq 1-\frac A{2n}.
\end{equation}

Thus,  if $ a=1-\frac A{2n}$ and $b=\frac 1{2n}\left[ A+\frac{2n}m -2n +2\right]$, the above shows that 

$$\ck\left(h^{-\epsilon}\right)(z) \lesssim h(z)^{-\epsilon}\qquad\forall\epsilon\in [a,b).$$
Lemma \ref{L:SchursLemma}, and elementary algebra on the endpoints $a, b$, then show that $\ck$ is bounded on $L^p$ for the range of $p$ stated in Proposition \ref{P:typeA_mapping}.
\end{proof}

\subsection{Mapping properties of sub-Bergman projections}

From the polynomial expressions (\ref{E:NumPolynomialf}), (\ref{E:NumPolynomialg}) and the fact that $|s|^m < |t|^n <1$ when $(z,w)\in\h_{m/n}\times \h_{m/n}$, the estimates

\begin{align*}
|f_j(s,t)| \lesssim |t|^n, \qquad |g_j(t)| \lesssim 1,
\end{align*}
are valid, for constants independent of $(z,w)\in\h_{m/n}\times \h_{m/n}$.
Consequently, the sub-Bergman kernel $K_j$ satisfies the estimate

\begin{align}\label{E:SubkernelEstimate}
\left|K_j(z,w)\right| \lesssim \frac{|t|^{2n-1-E_j+\frac{nj}{m}}}{|1-t|^2|t^n-s^m|^2}.
\end{align}
\medskip
From this, $L^p$ boundedness of each sub-Bergman projection $\ck_j: L^2(\h_{m/n}) \to \cs_j$ follows:

\begin{proposition}\label{P:boundednessSubprojections}
For all $p \in (\frac{2m+2n}{m-mE_j+2n+jn}, \frac{2m+2n}{m+mE_j-nj})$, $\ck_j$ is a bounded operator on $L^p(\h_{m/n})$.
\end{proposition}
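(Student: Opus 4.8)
The plan is to read the proposition directly off the kernel estimate \eqref{E:SubkernelEstimate} combined with the general type-$A$ mapping result, Proposition \ref{P:typeA_mapping}. Writing $t = z_2\bar w_2$ and $s = z_1\bar w_1$ as in Theorem \ref{T:subKernelCalculation}, one has $|t| = |z_2\bar w_2|$, $|1-t| = |1 - z_2\bar w_2|$, and $|t^n - s^m| = |z_2^n\bar w_2^n - z_1^m\bar w_1^m|$, so the estimate \eqref{E:SubkernelEstimate} is literally the defining inequality \eqref{E:typeA} of a type-$A$ operator on $\h_{m/n}$ with
\[
A = 2n - 1 - E_j + \frac{nj}{m}.
\]
Hence $\ck_j$ is of type-$A$ for this value of $A$, and the entire statement will follow once I substitute this $A$ into the conclusion of Proposition \ref{P:typeA_mapping} and verify that its hypotheses hold.

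Next I would compute the endpoints. With the $A$ above,
\[
Am = 2mn - m - mE_j + nj,
\]
so the lower denominator in \eqref{E:typeA_range} becomes $Am + 2n + 2m - 2nm = m - mE_j + 2n + nj$ (the $\pm 2mn$ terms cancel), and the upper denominator becomes $2nm - Am = m + mE_j - nj$. Substituting these into \eqref{E:typeA_range} returns precisely the interval
\[
\left(\frac{2m+2n}{\,m - mE_j + 2n + nj\,},\ \frac{2m+2n}{\,m + mE_j - nj\,}\right)
\]
asserted in the proposition, so this portion is pure bookkeeping.

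The one genuine point — the main, if mild, obstacle — is to confirm the three side conditions of Proposition \ref{P:typeA_mapping}: positivity of both denominators and the gap inequality $Am + 2n + 2m - 2nm > 2nm - Am$. For this I would use the defining bounds of the floor,
\[
(j+1)n - 1 - m < mE_j \le (j+1)n - 1.
\]
The right-hand inequality gives $m - mE_j + 2n + nj \ge m + n + 1 > 0$, so the lower denominator is positive; the left-hand inequality gives $m + mE_j - nj > n - 1 \ge 0$, so the upper denominator is positive; and the right-hand inequality also yields $mE_j \le (j+1)n - 1 < n(j+1)$, i.e.\ $n(j+1) > mE_j$, which is exactly equivalent to the required gap $m - mE_j + 2n + nj > m + mE_j - nj$. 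With all three conditions checked, Proposition \ref{P:typeA_mapping} applies directly and gives the stated $L^p$ boundedness of $\ck_j$; this simultaneously confirms the earlier remark that these positivity conditions hold automatically for the sub-Bergman kernels.
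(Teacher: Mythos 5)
Your proposal is correct and follows exactly the paper's own route: the paper's proof is the one-line observation that \eqref{E:SubkernelEstimate} makes $\ck_j$ an operator of type-$A$ with $A = 2n-1-E_j+\frac{nj}{m}$, so Proposition \ref{P:typeA_mapping} applies. Your explicit verification of the side conditions via $(j+1)n-1-m < mE_j \le (j+1)n-1$ is a welcome addition, since the paper merely asserts in a remark that these positivity conditions hold automatically for the sub-Bergman kernels.
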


\begin{proof}
This comes immediately from Proposition \ref{P:typeA_mapping} by taking $A = 2n -1-E_j+\frac{nj}{m}$.
\end{proof}

The range of $L^p$ boundedness for the full Bergman projection is obtained by taking the ``worst'' range associated to the sub-Bergman projections given by Proposition \ref{P:boundednessSubprojections}. To see this explicitly, recall
that $E_j=\left\lfloor \frac{(j+1)n-1}m\right\rfloor$, so
\begin{equation*}
\frac{n(j+1)-1}{m}-1 < E_j \leq \frac{n(j+1)-1}{m}\qquad\forall\, j\in\{0,\dots , m-1\}.  
\end{equation*}
As $m$ and $n$ are relatively prime, elementary number theory, \cite{HardyWrightbook}, Theorem 57, page 51, gives a unique $x\in\{0,\dots , m-1\}$ such that
$$n\, x\equiv 1\pmod m.$$
Note that $x\neq 0$ or $m-1$. Setting $j_0=x-1$, it follows that

\begin{equation}\label{E:specialE}
E_{j_0}=\frac{n(j_0+1)-1}m,
\end{equation}
and for all $j\neq j_0$ in $\{0,\dots , m-1\}$,

$$E_{j} <\frac{n(j+1)-1}m.$$
Thus $mE_{j_0}-nj_0 =n-1$ and Proposition \ref{P:boundednessSubprojections} says that $\ck_{j_0}$ is bounded on $L^p\left(\h_{m/n}\right)$ for $p \in (\frac{2m+2n}{m+n+1},\frac{2m+2n}{m+n-1})$.
It also says that the sub-Bergman projections $\ck_j$, $j\neq j_0$, are $L^p$ bounded for a larger H\" older symmetric interval about 2. Therefore, from \eqref{E:bergman_decomp} we obtain

\begin{corollary}\label{C:1halfMain}
The Bergman projection $\mathbf{B}_{m/n}$  is a bounded operator on $L^p(\h_{m/n})$ for all $p \in \left(\frac{2m+2n}{m+n+1},\frac{2m+2n}{m+n-1}\right)$.
\end{corollary}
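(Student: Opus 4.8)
The plan is to exploit the finite orthogonal decomposition \eqref{E:bergman_decomp}, which expresses $\mathbf{B}_{m/n} = \sum_{j=0}^{m-1}\ck_j$ as a sum of the $m$ sub-Bergman projections. A finite sum of operators is bounded on $L^p$ whenever each summand is, so it suffices to intersect over $j=0,\dots,m-1$ the boundedness intervals furnished by Proposition \ref{P:boundednessSubprojections} and to verify that the intersection contains (in fact equals) $\left(\frac{2m+2n}{m+n+1},\frac{2m+2n}{m+n-1}\right)$. Thus the whole argument reduces to a comparison of $m$ explicit intervals.

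To organize that comparison I would set $D_j := mE_j - nj$ and rewrite the interval from Proposition \ref{P:boundednessSubprojections} as $\left(\frac{2m+2n}{m+2n-D_j},\frac{2m+2n}{m+D_j}\right)$. The two endpoint-denominators sum to $2m+2n$, so the reciprocals of the endpoints sum to $1$: each interval is Hölder-symmetric about $p=2$, and as $D_j$ increases the upper endpoint decreases while the lower endpoint increases, so the interval shrinks monotonically in $D_j$. Hence the intersection over all $j$ is governed by the single largest value of $D_j$, whose interval is the narrowest and is contained in all the others. It then only remains to locate the maximizing index and evaluate $D_j$ there. From $E_j = \lfloor \frac{(j+1)n-1}{m}\rfloor$ one has $D_j = mE_j - nj \le n-1$, with equality exactly when $m \mid (j+1)n - 1$; since $\gcd(m,n)=1$ there is a unique residue solving $nx \equiv 1 \pmod m$, and as in the paragraph preceding the statement the choice $j_0 = x-1$ realizes $D_{j_0} = n-1$. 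Substituting $D_{j_0}=n-1$ gives exactly the interval $\left(\frac{2m+2n}{m+n+1},\frac{2m+2n}{m+n-1}\right)$, which is therefore contained in every $I_j$, so the sum is bounded there.

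The step I expect to require the most care is the nesting claim, namely that a larger $D_j$ genuinely produces a smaller interval and that all $m$ intervals share the common center $p=2$, so the intersection collapses to the single $j_0$-interval rather than to something strictly narrower. This is elementary once the Hölder-conjugacy of the endpoints is recorded, but it is the crux that upgrades the individual per-$j$ estimates of Proposition \ref{P:boundednessSubprojections} into the sharp global range claimed for $\mathbf{B}_{m/n}$.
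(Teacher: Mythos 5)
Your proposal is correct and is essentially the paper's own argument: both exploit the decomposition $\mathbf{B}_{m/n}=\sum_{j=0}^{m-1}\ck_j$, feed each $\ck_j$ into Proposition \ref{P:boundednessSubprojections}, and identify the narrowest (H\"older-symmetric about $p=2$) interval as the one at $j_0=x-1$ with $nx\equiv 1\pmod m$, where $mE_{j_0}-nj_0=n-1$. Your reformulation via $D_j=mE_j-nj$ and the monotone nesting of the intervals is just a slightly more systematic bookkeeping of the same comparison the paper makes.
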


The observations on $E_j$ and $E_{j_0}$, and \eqref{E:SubkernelEstimate}, also yield the following estimate on the full Bergman kernel

\begin{equation}\label{E:BergmankernelEstimate}
|\B_{m/n}(z,w)| \lesssim \frac{|t|^{2n-1+ \frac{1-n}{m}}}{|1-t|^2|t^n-s^m|^2}.
\end{equation}

\begin{remark}
In the next section, the range of $L^p$ boundedness in Corollary \ref{C:1halfMain} is sharp.  This implies estimate \eqref{E:BergmankernelEstimate} is optimal. 
\end{remark}

\begin{remark} We emphasize that the $L^p$ boundedness results in this section do not require cancellation properties of the kernels involved. Thus Proposition \ref{P:boundednessSubprojections} and Corollary \ref{C:1halfMain} also apply to the operators associated to $|K_j(z,w)|$ and $|\B_{m/n}(z,w)|$. It is interesting that, in {\it all} known cases where one implication can be proved, it holds that $\mathbf{B}_{\Omega}$ is bounded on $L^p(\Omega)$ if and only if $\left|\mathbf{B}_{\Omega}\right|$ is also bounded on $L^p(\Omega)$.  See \cite{Axl88}, \cite{McNSte94}, \cite{LanSte12} for further information. This equivalence is of course false for more general operators, e.g. the Szeg\"o projection or Cauchy-Leray integral.

\end{remark}


\section{The rational case: $L^p$ non-boundedness}\label{S:NonLpRational}
As in \cite{EdhMcN16}, we shall show that $\mathbf{B}_{m/n}$ fails to be $L^p$ bounded (for the range of $p$ indicated in Theorem \ref{T:MainLpRational}) by exhibiting a single function $f\in L^\infty\left(\h_{m/n}\right)$ such that
$\mathbf{B}_{m/n} f\notin L^p\left(\h_{m/n}\right)$.

The initial step is based on orthogonality and does not require $\gamma$ to be rational. Namely,
the rotational symmetry of $\h_\gamma$ implies that $\mathbf{B}_\gamma$ acts in a simple fashion on certain monomials in $z_1$ and $\bar z_2$:

\begin{proposition}\label{P:EasyMonomials}
If both $(\beta_1,\beta_2)$ and $(\beta_1, -\beta_2)$ belong to $\ca^2_\gamma$, then there exists a constant $C$ such that

\begin{equation*}
\mathbf{B}_\gamma\left(z_1^{\beta_1}\, \bar z_2^{\,\beta_2}\right) = C\, z_1^{\beta_1}\, z_2^{-\beta_2}.
\end{equation*}
\end{proposition}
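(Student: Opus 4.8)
The plan is to compute the projection directly against the orthogonal monomial basis, using only orthogonality and the rotational symmetry of $\h_\gamma$. Since $\h_\gamma$ is Reinhardt, the family $\left\{z^\alpha : \alpha \in \ca^2_\gamma\right\}$ is an orthogonal basis of $A^2(\h_\gamma)$, as noted after the definition of $\gamma$-allowable indices. Write $f = z_1^{\beta_1}\bar z_2^{\,\beta_2}$. I would use the hypothesis $(\beta_1,\beta_2)\in\ca^2_\gamma$ first, and only, to guarantee that $f\in L^2(\h_\gamma)$: since $|f| = \left|z_1^{\beta_1}z_2^{\beta_2}\right|$ pointwise, one has $\|f\|^2 = \left\|z^{(\beta_1,\beta_2)}\right\|^2 < \infty$ by Lemma \ref{L:gammaAllowableMultiIndex}. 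Consequently $\mathbf{B}_\gamma f$ is the genuine Hilbert-space orthogonal projection of $f$ onto $A^2(\h_\gamma)$ and admits the expansion
\begin{equation*}
\mathbf{B}_\gamma f = \sum_{\alpha\in\ca^2_\gamma} \frac{\langle f, z^\alpha\rangle}{\|z^\alpha\|^2}\, z^\alpha.
\end{equation*}

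Next I would compute each coefficient $\langle f, z^\alpha\rangle$ in polar coordinates $z_j = r_j e^{i\theta_j}$. The integrand carries the angular factor $e^{i(\beta_1-\alpha_1)\theta_1}\,e^{-i(\beta_2+\alpha_2)\theta_2}$, and because $\h_\gamma$ is the full torus in the angular variables (its defining inequalities $|z_1|^\gamma < |z_2| < 1$ involve only the moduli), integrating $\theta_1,\theta_2$ over $[0,2\pi)$ annihilates every term unless simultaneously $\alpha_1=\beta_1$ and $\alpha_2=-\beta_2$. Thus at most the single index $\alpha=(\beta_1,-\beta_2)$ survives. Here the second hypothesis, $(\beta_1,-\beta_2)\in\ca^2_\gamma$, is exactly what guarantees that this index actually belongs to the basis, i.e. that $z_1^{\beta_1}z_2^{-\beta_2}\in A^2(\h_\gamma)$; retaining the one nonzero term yields
\begin{equation*}
\mathbf{B}_\gamma f = C\, z_1^{\beta_1}z_2^{-\beta_2},\qquad C := \frac{\langle f, z^{(\beta_1,-\beta_2)}\rangle}{\|z^{(\beta_1,-\beta_2)}\|^2}.
\end{equation*}
A one-line computation shows $\langle f, z^{(\beta_1,-\beta_2)}\rangle = \int_{\h_\gamma}|z_1|^{2\beta_1}\,dV$, which is a finite positive number (since $(\beta_1,0)\in\ca^2_\gamma$ automatically for $\beta_1\ge 0$), so $C$ is a well-defined constant.

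The argument is essentially a bookkeeping exercise in orthogonality, so I anticipate no serious analytic obstacle. The point requiring the most care is to keep the roles of the two hypotheses distinct: $(\beta_1,\beta_2)\in\ca^2_\gamma$ is needed so that $f\in L^2$ and the projection is defined in the Hilbert-space sense, while $(\beta_1,-\beta_2)\in\ca^2_\gamma$ is needed so that the unique angularly-surviving monomial is genuinely an element of the orthogonal basis. A secondary subtlety worth flagging is that $f$ is not holomorphic, as it involves $\bar z_2$; one is therefore projecting an honest $L^2$ function rather than re-expanding a holomorphic one, and the term-by-term computation above handles this cleanly precisely because the expansion coefficients are ordinary $L^2$ inner products, to which the angular orthogonality applies verbatim.
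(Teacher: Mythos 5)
Your proposal is correct and follows essentially the same route as the paper: the paper likewise expands $\mathbf{B}_\gamma f$ against the orthogonal monomial basis (writing the kernel as $\sum_{\alpha\in\ca^2_\gamma} z^\alpha\bar w^\alpha/c_\alpha^2$ via \eqref{E:BergmanInfiniteSum}), passes to polar coordinates, and lets the angular integrals annihilate every index except $\alpha=(\beta_1,-\beta_2)$. Your write-up is in fact slightly more explicit than the paper's about where each hypothesis enters --- $(\beta_1,\beta_2)\in\ca^2_\gamma$ for $f\in L^2(\h_\gamma)$, and $(\beta_1,-\beta_2)\in\ca^2_\gamma$ for the surviving monomial to lie in the basis --- but this is a refinement of presentation, not of method.
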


\begin{proof} 
Let $f(z) = z_1^{\beta_1}\bar{z}_2^{\, \beta_2}$, $\ca=\ca^2_\gamma$, $\h=\h_\gamma$ and $H$ be the Reinhardt shadow of $\h_\gamma$ for short. A straightforward computation yields

\begin{align*}
\mathbf{B}_\gamma(f)(z) &=  \int_{\h} \sum_{\alpha \in \ca} \frac{z^{\alpha}\bar{w}^{\alpha}}{c_{\alpha}^2} f(w) \,dV(w) \\
	&= \sum_{\alpha \in \ca}\frac{z^{\alpha}}{c_{\alpha}^2} \int_{\h} w_1^{\beta_1}\bar{w}_1^{\alpha_1}\bar{w}_2^{\alpha_2+\beta_2} \,dV(w) \\
	&= \sum_{\alpha \in \ca} \frac{z^{\alpha}}{c_{\alpha}^2} \int_{\h} r_1^{\alpha_1+\beta_1+1} e^{i{\theta_1}{(\beta_1-\alpha_1)}}r_2^{\alpha_2+\beta_2+1} e^{-i{\theta_2}({\beta_2+\alpha_2})}\,dr \,d\theta \\
	&=\sum_{\alpha \in \ca} \frac{z^{\alpha}}{c_{\alpha}^2}\left( \int_0^{2\pi} e^{i{\theta_1}{(\beta_1-\alpha_1)}} \, d{\theta_1} \right) \left( \int_0^{2\pi} e^{-i{\theta_2}{(\beta_2+\alpha_2})} \, d{\theta_2} \right)\left( \int_{H} r_1^{\alpha_1+\beta_1+1}r_2^{\alpha_2+\beta_2+1} \, dr \right)\\
	&=C z_1^{\beta_1}z_2^{\,-\beta_2},
\end{align*}
where $C$ is a constant.  
\end{proof}

When $\gamma\in\Q^+$, a similar result on the subspaces $\cs_j$ holds, by the same proof:

\begin{proposition}\label{P:EasyMonomials2}
If both $(\beta_1,\beta_2)$ and $(\beta_1, -\beta_2)$ belong to $\cg_j$ for some $j\in\{0,1,\dots, m-1\}$, then there exists a constant $C$ such that

\begin{align*}
 \ck_l \left(z_1^{\beta_1}\, \bar z_2^{\,\beta_2}\right) = 
  \begin{cases}
 C\, z_1^{\beta_1}\, z_2^{-\beta_2}   , & l=j\\
    0, & l\neq j,
  \end{cases}
  \end{align*}
   for all $l\in\{0,1,\dots, m-1\}$.
\end{proposition}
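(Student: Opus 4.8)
The plan is to follow the proof of Proposition~\ref{P:EasyMonomials} essentially verbatim, tracking only which subspace $\cs_l$ receives the nonzero contribution. I would begin by writing $f(z) = z_1^{\beta_1}\bar z_2^{\,\beta_2}$ and expanding $\ck_l(f)(z)$ using the sum representation \eqref{E:subBergmanSum} for the subkernel $K_l$, namely $K_l(z,w) = \sum_{\alpha \in \cg_l} z^{\alpha}\bar w^{\alpha} c_{m/n,\alpha}^{-2}$. Interchanging sum and integral, the key computation is
\begin{equation*}
\ck_l(f)(z) = \sum_{\alpha \in \cg_l} \frac{z^{\alpha}}{c_{m/n,\alpha}^2} \int_{\h_{m/n}} w_1^{\beta_1}\bar w_1^{\alpha_1}\,\bar w_2^{\,\alpha_2+\beta_2}\, dV(w).
\end{equation*}
Passing to polar coordinates, the two angular integrals $\int_0^{2\pi} e^{i\theta_1(\beta_1-\alpha_1)}\,d\theta_1$ and $\int_0^{2\pi} e^{-i\theta_2(\beta_2+\alpha_2)}\,d\theta_2$ each vanish by orthogonality of the exponentials unless $\alpha_1 = \beta_1$ and $\alpha_2 = -\beta_2$ simultaneously. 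Hence at most one multi-index survives, namely $\alpha = (\beta_1,-\beta_2)$.

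The crux is then to identify \emph{which} $\cg_l$ contains this surviving index. By hypothesis $(\beta_1,-\beta_2) \in \cg_j$, and by the definition \eqref{E:Sjindices} membership in $\cg_l$ is governed entirely by the congruence class of the first coordinate modulo $m$: $(\beta_1,-\beta_2)\in\cg_l$ if and only if $\beta_1 \equiv l \pmod m$. Since $(\beta_1,-\beta_2)$ already lies in $\cg_j$, we have $\beta_1 \equiv j \pmod m$, so the surviving term appears in the sum defining $\ck_l$ precisely when $l = j$, and in no other $\cg_l$ because the residue classes partition the allowable indices (the sets $\cg_l$ are pairwise disjoint, as noted after \eqref{E:Sjindices}). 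For $l = j$ the single surviving term yields $C\,z_1^{\beta_1}z_2^{-\beta_2}$ with
\begin{equation*}
C = \frac{4\pi^2}{c_{m/n,(\beta_1,-\beta_2)}^2}\int_{H_{m/n}} r_1^{2\beta_1+1} r_2^{-2\beta_2+1}\, dr,
\end{equation*}
while for every $l \neq j$ the index $(\beta_1,-\beta_2)$ is absent from $\cg_l$, so the sum is empty on the relevant term and $\ck_l(f) \equiv 0$.

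I do not anticipate a genuine obstacle here, since the computation is identical in structure to Proposition~\ref{P:EasyMonomials}; the only new ingredient is the bookkeeping of the congruence class, which is immediate from \eqref{E:Sjindices}. The one point meriting a line of care is justifying the interchange of summation and integration, but this is covered by the normal convergence of \eqref{E:subBergmanSum} on $\h_{m/n}\times\h_{m/n}$ asserted just before Theorem~\ref{T:subKernelCalculation}, together with the boundedness of $f \in L^\infty$, so dominated convergence applies on compact exhaustions. Since the paper explicitly states the result holds ``by the same proof,'' I would keep the write-up brief, emphasizing only the residue-class selection that distinguishes the $l = j$ and $l \neq j$ cases.
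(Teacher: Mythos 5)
Your proposal is correct and follows essentially the same route as the paper, which offers no separate argument for Proposition \ref{P:EasyMonomials2} but asserts it holds ``by the same proof'' as Proposition \ref{P:EasyMonomials} --- namely the angular-integral orthogonality that isolates the single index $\alpha=(\beta_1,-\beta_2)$, combined with the residue-class bookkeeping from \eqref{E:Sjindices} showing this index lies in $\cg_l$ precisely when $l=j$. One immaterial slip: in your formula for $C$ the radial exponent of $r_2$ should be $1$ rather than $-2\beta_2+1$, since the surviving term has $\alpha_2+\beta_2=0$ so the $w_2$-dependence of the integrand is trivial; this does not matter, as the proposition asserts only the existence of the constant $C$.
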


Let $m,n \in \Z^+$ be relatively prime.  Multi-indices $(\alpha_1, \alpha_2)$ lying on the boundary line \eqref{E:RationalThreshold} of the lattice point diagram of $\h_{m/n}$, i.e. those indices satisfying 
$\alpha_2 = -\frac nm \alpha_1 - 1 + \frac{1-n}{m}$, are ``just barely'' in $\ca^2_{m/n}$. The case $\frac mn = \frac 32$ is illustrated below:

{\centering
\begin{tikzpicture}
\draw[-{latex}, thick] (0,0) -- (9,0) node[anchor=south] {$\alpha_1$};
\draw[-{latex}, thick] (0,0) -- (0,-7) node[anchor=east] {$\alpha_2$};

\draw[-stealth, thin] (0,-4/3) -- (8,-20/3) node[anchor=west] {$\frac mn = \frac 32$};

\filldraw[black] (0,0) circle (1.5pt) node[anchor=east] {(0,0)};
\filldraw[black] (1,0) circle (1.5pt) ;
\filldraw[black] (2,0) circle (1.5pt) ;
\filldraw[black] (3,0) circle (1.5pt) ;
\filldraw[black] (4,0) circle (1.5pt) ;
\filldraw[black] (5,0) circle (1.5pt) ;
\filldraw[black] (6,0) circle (1.5pt) ;
\filldraw[black] (7,0) circle (1.5pt) ;
\filldraw[black] (8,0) circle (1.5pt) ;

\filldraw[black] (0,-1) circle (1.5pt) ;
\filldraw[black] (1,-1) circle (1.5pt) ;
\filldraw[black] (2,-1) circle (1.5pt) ;
\filldraw[black] (3,-1) circle (1.5pt) ;
\filldraw[black] (4,-1) circle (1.5pt) ;
\filldraw[black] (5,-1) circle (1.5pt) ;
\filldraw[black] (6,-1) circle (1.5pt) ;
\filldraw[black] (7,-1) circle (1.5pt) ;
\filldraw[black] (8,-1) circle (1.5pt) ;

\filldraw[black] (0,-2) circle (1.5pt) ;
\filldraw[black] (1,-2) circle (1.5pt) ;
\filldraw[black] (2,-2) circle (1.5pt) ;
\filldraw[black] (3,-2) circle (1.5pt) ;
\filldraw[black] (4,-2) circle (1.5pt) ;
\filldraw[black] (5,-2) circle (1.5pt) ;
\filldraw[black] (6,-2) circle (1.5pt) ;
\filldraw[black] (7,-2) circle (1.5pt) ;
\filldraw[black] (8,-2) circle (1.5pt) ;

\filldraw[black] (0,-3) circle (1.5pt) ;
\filldraw[black] (1,-3) circle (1.5pt) ;
\filldraw[black] (2,-3) circle (1.5pt) ;
\filldraw[black] (3,-3) circle (1.5pt) ;
\filldraw[black] (4,-3) circle (1.5pt) ;
\filldraw[black] (5,-3) circle (1.5pt) ;
\filldraw[black] (6,-3) circle (1.5pt) ;
\filldraw[black] (7,-3) circle (1.5pt) ;
\filldraw[black] (8,-3) circle (1.5pt) ;

\filldraw[black] (0,-4) circle (1.5pt) ;
\filldraw[black] (1,-4) circle (1.5pt) ;
\filldraw[black] (2,-4) circle (1.5pt) ;
\filldraw[black] (3,-4) circle (1.5pt) ;
\filldraw[black] (4,-4) circle (1.5pt) ;
\filldraw[black] (5,-4) circle (1.5pt) ;
\filldraw[black] (6,-4) circle (1.5pt) ;
\filldraw[black] (7,-4) circle (1.5pt) ;
\filldraw[black] (8,-4) circle (1.5pt) ;

\filldraw[black] (0,-5) circle (1.5pt) ;
\filldraw[black] (1,-5) circle (1.5pt) ;
\filldraw[black] (2,-5) circle (1.5pt) ;
\filldraw[black] (3,-5) circle (1.5pt) ;
\filldraw[black] (4,-5) circle (1.5pt) ;
\filldraw[black] (5,-5) circle (1.5pt) ;
\filldraw[black] (6,-5) circle (1.5pt) ;
\filldraw[black] (7,-5) circle (1.5pt) ;
\filldraw[black] (8,-5) circle (1.5pt) ;

\filldraw[black] (0,-6) circle (1.5pt) ;
\filldraw[black] (1,-6) circle (1.5pt) ;
\filldraw[black] (2,-6) circle (1.5pt) ;
\filldraw[black] (3,-6) circle (1.5pt) ;
\filldraw[black] (4,-6) circle (1.5pt) ;
\filldraw[black] (5,-6) circle (1.5pt) ;
\filldraw[black] (6,-6) circle (1.5pt) ;
\filldraw[black] (7,-6) circle (1.5pt) ;
\filldraw[black] (8,-6) circle (1.5pt) ;

\draw[black] (0,-1) circle (5pt) ;
\draw[black] (1,-2) circle (5pt) ;
\draw[black] (2,-2) circle (5pt) ;
\draw[black] (3,-3) circle (5pt) ;
\draw[black] (4,-4) circle (5pt) ;
\draw[black] (5,-4) circle (5pt) ;
\draw[black] (6,-5) circle (5pt) ;
\draw[black] (7,-6) circle (5pt) ;
\draw[black] (8,-6) circle (5pt) ;

\end{tikzpicture}\\
}

There are three pieces of useful information that may be extracted from this lattice point diagram. First, consider the vertical line $\alpha_1=M$, $M\in\Z^+$, and the lattice point on it that is closest to
the boundary line $\alpha_2=-\frac nm\alpha_1-1+\frac{1-n}m$ (i.e., the points circled in the picture above). The $\alpha_2$ coordinate of this point was also defined in \eqref{E:ell(j)}.
The monomial corresponding to this point, $\left(M,\ell(M)\right)$, has the smallest range of
$L^p$ integrability, $p>2$, amongst all the monomials corresponding to lattice points on $\alpha_2=M$, $\alpha_2\geq -\frac nm\alpha_1-1+\frac{1-n}m$. Second, this range of $L^p$ integrability is the same
for all vertical lines $\alpha_1=M +km$, for $k\in\Z^+$. Finally, the closer the circled lattice point is to the boundary line, the smaller the range $p> 2$ for which $z^\alpha\in L^p\left(\h_{m/n}\right)$. When it actually lies on the boundary line,
the monomial $z^\alpha$ corresponding to this lattice point has the smallest range of $L^p$ integrability for all $\alpha\in\ca^2_{m/n}$.

The following results detail these observations:

\begin{proposition}\label{P:endpoints_are_worst}
Let $\left(M,\ell(M)\right)\in \ca^2_{m/n}$, where $\ell(M)$ is described above or, equivalently, defined by \eqref{E:ell(j)}.
Let $p>2$.

If $z_1^M\, z_2^{\ell(M)}\in L^p\left(\h_{m/n}\right)$, then $z_1^M\, z_2^{\alpha_2}\in L^p\left(\h_{m/n}\right)$ for all $\alpha_2\geq\ell(M)$.
\end{proposition}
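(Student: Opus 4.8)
The plan is to deduce the statement directly from the $L^p$-allowability criterion \eqref{E:p_allowable}, so that the proposition reduces to a monotonicity observation in the second index. First I would note that since $M\ge 0$, every monomial $z_1^M z_2^{\alpha_2}$ is holomorphic on $\h_{m/n}$ (here $z_2\neq 0$), so its membership in $L^p\left(\h_{m/n}\right)$ coincides with membership in $A^p\left(\h_{m/n}\right)$; the latter is characterized by \eqref{E:p_allowable}. Thus it suffices to track how the $p$-allowability condition behaves as $\alpha_2$ increases with $\alpha_1=M$ held fixed.

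The key point is that \eqref{E:p_allowable}, with $\gamma=m/n$ and $\alpha_1=M$, is a strict lower bound on $\alpha_2$,
\[
\alpha_2 \;>\; -\frac nm\,M \;-\;\frac 2p\;-\;\frac{2n}{mp},
\]
whose right-hand side is a fixed threshold independent of $\alpha_2$. The hypothesis that $z_1^M z_2^{\ell(M)}\in L^p\left(\h_{m/n}\right)$ asserts exactly that $\ell(M)$ lies strictly above this threshold. Consequently any $\alpha_2\ge \ell(M)$ also lies strictly above it, so \eqref{E:p_allowable} again holds and $z_1^M z_2^{\alpha_2}\in L^p\left(\h_{m/n}\right)$. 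Geometrically this is just the statement that raising $\alpha_2$ pushes the lattice point $(M,\alpha_2)$ strictly farther above the critical line \eqref{E:RationalThreshold}, which can only improve integrability.

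I do not expect a genuine obstacle here. The only step requiring any care is to confirm that $L^p$-integrability of these holomorphic monomials is captured verbatim by \eqref{E:p_allowable}, i.e. that repeating the convergence computation \eqref{E:gammaNorm2} with $p$ in place of $2$ yields precisely the displayed inequality, after which the conclusion is immediate. I would also remark that the hypothesis $p>2$ is not actually used in the argument and is retained only because this is the regime relevant to the non-boundedness analysis that follows.
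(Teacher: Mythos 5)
Your proof is correct, but it takes a different (slightly heavier) route than the paper. The paper's own proof is a one-line pointwise domination: writing $\left|z_1^M z_2^{\alpha_2}\right| = \left|z_1^M z_2^{\ell(M)}\right|\cdot |z_2|^{\alpha_2-\ell(M)}$ and noting that $|z_2|<1$ on $\h_{m/n}$, the second factor is bounded, so $L^p$ membership of the first factor transfers immediately to the product. This requires no integrability criterion at all, and in particular does not depend on the validity of \eqref{E:p_allowable}. Your argument instead invokes the characterization \eqref{E:p_allowable} of $\ca^p_{m/n}$ and observes that the threshold on $\alpha_2$ is independent of $\alpha_2$, so exceeding it is preserved under increasing $\alpha_2$; this is valid, but as you yourself flag, it rests on the claim that \eqref{E:p_allowable} is the exact $L^p$ criterion, which the paper only sketches (``the direct analog of \eqref{E:gammaNorm2}''), so your proof quietly imports that computation as a dependency the paper's proof avoids. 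What your route buys in exchange is more information: it characterizes exactly which $\alpha_2$ give $L^p$ monomials along the line $\alpha_1=M$, rather than merely propagating integrability upward, and it connects cleanly to the lattice point diagram. Your closing remark is also accurate on both counts: neither argument uses $p>2$ (the paper's domination works for any $p>0$), and the hypothesis is retained only because this is the regime relevant to the non-boundedness analysis.
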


\begin{proof}
Obvious, since $\left|z_2^{\alpha_2}\right|=\left|z_2^{\ell(M)}\right|\cdot|z_2|^{\alpha_2-\ell(M)}$ and $|z_2|^{\alpha_2-\ell(M)}\in L^\infty\left(\h_{m/n}\right)$.
\end{proof}

\begin{proposition}\label{P:congruents_are_same}
If $N\equiv M\pmod m$ and $z_1^M z_2^{\ell(M)}\in L^p\left(\h_{m/n}\right)$, then  $z_1^N z_2^{\ell(N)}\in L^p\left(\h_{m/n}\right)$.

Furthermore,

\begin{equation*}
\left\|z_1^N\, z_2^{\ell(N)}\right\|_p= \left\|z_1^M\, z_2^{\ell(M)}\right\|_p.
\end{equation*}
\end{proposition}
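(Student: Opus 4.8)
The plan is to reduce the two–dimensional problem to an explicit one–variable integral, exploiting the fact that passing from $M$ to $N\equiv M\pmod m$ translates the index $(M,\ell(M))$ along the critical line \eqref{E:RationalThreshold}. First I would record the arithmetic relating $\ell(M)$ and $\ell(N)$. Write $N=M+km$ with $k\in\Z$, and let $j\in\{0,\dots,m-1\}$ be the common residue of $M$ and $N$ modulo $m$. Formula \eqref{E:ell(j)} gives
\begin{equation*}
\ell(M)=-1-\frac{n(M-j)}{m}-E_j,\qquad \ell(N)=-1-\frac{n(N-j)}{m}-E_j,
\end{equation*}
so subtracting yields $\ell(N)=\ell(M)-kn$. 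Hence $(N,\ell(N))=(M,\ell(M))+k(m,-n)$: the two indices differ by an integer multiple of the direction vector of the boundary line. Equivalently, $z_1^{N}z_2^{\ell(N)}=z_1^{M}z_2^{\ell(M)}\bigl(z_1^{m}z_2^{-n}\bigr)^{k}$, and since $|z_1^m z_2^{-n}|\le 1$ on $\h_{m/n}$, the case $k\ge 0$ of the membership claim follows immediately from pointwise domination.

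To handle both signs of $k$ and to reach the quantitative statement, I would compute the $L^p$ norm of a general monomial directly, exactly as in \eqref{E:gammaNorm2}:
\begin{equation*}
\bigl\|z^{\alpha}\bigr\|_p^p=4\pi^2\int_0^1\!\!\int_0^{r_2^{n/m}}r_1^{p\alpha_1+1}r_2^{p\alpha_2+1}\,dr_1\,dr_2.
\end{equation*}
The substitution $u=r_1^{m}r_2^{-n}$, under which $u$ sweeps $(0,1)$ as $r_1$ sweeps $(0,r_2^{n/m})$, decouples this into the product of a $u$–integral and an $r_2$–integral. The key point is that the exponent of $r_2$ in the decoupled integral depends on $\alpha$ only through $\tfrac{n}{m}\alpha_1+\alpha_2$, and by the displacement relation this quantity equals $\tfrac{nj}{m}-1-E_j$ for both $\alpha=(M,\ell(M))$ and $\alpha=(N,\ell(N))$. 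Thus the $r_2$–integral, together with its convergence, is literally the same for the two indices, whereas the $u$–integral is finite for every $p>0$ because $\alpha_1\ge 0$. Convergence of $\|z^\alpha\|_p^p$ is therefore controlled solely by the $r_2$–integral, which proves the membership equivalence for every $k\in\Z$.

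The final step is to assemble the two elementary factors into the asserted norm identity, and I expect this bookkeeping to be the main obstacle. The delicate feature is that the resulting closed form
\begin{equation*}
\bigl\|z^{\alpha}\bigr\|_p^p=\frac{4\pi^2}{(p\alpha_1+2)\bigl[\tfrac{n}{m}(p\alpha_1+2)+p\alpha_2+2\bigr]}
\end{equation*}
carries, besides the shift–invariant bracketed factor, a prefactor $(p\alpha_1+2)^{-1}$ that depends on $\alpha_1$ alone. Since $\alpha_1\mapsto\alpha_1+km$ alters this prefactor, this is precisely the point at which the computation must be scrutinized before passing from the indices $M$ and $N$; the robust content that survives unconditionally is the shift–invariance of the integrability threshold $\tfrac{n}{m}\alpha_1+\alpha_2$, which is exactly what the subsequent non–boundedness argument requires.
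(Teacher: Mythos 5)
Your treatment of the membership claim is correct and is essentially the paper's own argument: the paper likewise uses \eqref{E:ell(j)} to get $\ell(N)=\ell(M)-kn$ and then observes that the $L^p$-allowability condition \eqref{E:p_allowable} depends on $(\alpha_1,\alpha_2)$ only through $\frac{n}{m}\alpha_1+\alpha_2$, which is invariant under $(\alpha_1,\alpha_2)\mapsto(\alpha_1+m,\alpha_2-n)$. Your pointwise domination $\left|z_1^m z_2^{-n}\right|\le 1$ for $k\ge 0$ is a small extra; as you note, the shift-invariance of the $r_2$-exponent handles all $k\in\Z$ at once (the paper writes its proof for $k\in\Z^+$, but the same cancellation works for either sign).

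On the ``Furthermore'' clause, your hesitation is not merely prudent bookkeeping: you have uncovered an error in the paper. The paper dismisses the norm identity with one sentence (``follows by computing both expressions in polar coordinates''), but that computation yields exactly your closed form
\begin{equation*}
\left\|z^{\alpha}\right\|_p^p=\frac{4\pi^2}{(p\alpha_1+2)\left[\tfrac{n}{m}(p\alpha_1+2)+p\alpha_2+2\right]},
\end{equation*}
in which only the bracketed factor is invariant under the shift; the prefactor $(p\alpha_1+2)^{-1}$ is not, so the stated equality holds only when $N=M$. Concretely, on $\h_1$ (so $m=n=1$ and $\ell(M)=-M-1$) one has $\left\|z_2^{-1}\right\|_2^2=\pi^2$ while $\left\|z_1z_2^{-2}\right\|_2^2=\pi^2/2$, consistent with \eqref{E:gammaNorm}. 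What is true, and what your computation actually proves, is the corrected identity $(pN+2)\left\|z_1^N z_2^{\ell(N)}\right\|_p^p=(pM+2)\left\|z_1^M z_2^{\ell(M)}\right\|_p^p$, equivalently that the two monomials are $p$-th power integrable for exactly the same set of $p$. This weakening is harmless for the paper: the proposition is invoked only through the integrability threshold, and Propositions \ref{P:NonLpKj} and \ref{L:NonLpBoundednessRational} redo the divergence computation from scratch, so your closing remark about the shift-invariant threshold $\frac{n}{m}\alpha_1+\alpha_2$ being the robust content identifies precisely what the subsequent arguments need.
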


\begin{proof}
By \eqref{E:p_allowable}, the hypothesis implies

\begin{equation*}
\ell(M) > -\frac nm M-\frac 2p-\frac{2n}{mp}.
\end{equation*}
If $N=M+km$, for $k\in\Z^+$, the lattice point diagram shows that $\ell(N)=\ell(M)-kn$. Thus

\begin{align*}
kn+\ell(N) =\ell(M) &> -\frac nm M-\frac 2p -\frac{2n}{mp} \\
&= -\frac nm (N-km)-\frac 2p -\frac{2n}{mp}.
\end{align*}
This implies that $\left(N,\ell(N)\right)$ satisfies  \eqref{E:p_allowable}, so $z_1^N z_2^{\ell(N)}\in L^p\left(\h_{m/n}\right)$.

The equality of the $L^p$ norms follows by computing both expressions in polar coordinates.
\end{proof}

\begin{proposition}\label{P:NonLpKj} For each $j\in\{0, 1, \dots, m-1\}$, the sub-Bergman projection $\ck_j$ does not map $L^{\infty}(\h_{m/n})$ to $L^p(\h_{m/n})$
for any $p\geq \frac{2m+2n}{m+mE_j-nj}$.
\end{proposition}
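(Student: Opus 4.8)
The plan is to disprove the mapping property by producing a single bounded function whose image under $\ck_j$ fails to lie in $L^p$ for every $p$ in the stated range. Guided by the lattice point diagram, the natural candidate is the monomial sitting one row above the threshold line: set
\[
f(z) = z_1^{\,j}\,\bar z_2^{\,1+E_j},
\]
corresponding to the index $(\beta_1,\beta_2)=(j,\,1+E_j)$. Since $|z_1|,|z_2|<1$ on $\h_{m/n}$ and $1+E_j\geq 0$, we have $|f|\leq 1$, so $f\in L^\infty(\h_{m/n})\subset L^2(\h_{m/n})$. The point of this construction is that a \emph{single} monomial already suffices: no summation over a congruence class is needed, because the boundary monomial produced below sits exactly at the $L^p$ integrability edge matching the endpoint in Proposition \ref{P:boundednessSubprojections}.

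First I would verify that $f$ is eligible for Proposition \ref{P:EasyMonomials2}. Recalling $\ell(j)=-1-E_j$, the reflected index is $(\beta_1,-\beta_2)=(j,\ell(j))$, which lies precisely on the threshold line \eqref{E:RationalThreshold} and hence belongs to $\cg_j$; the index $(j,1+E_j)$ lies strictly above that line and has first coordinate $\equiv j \pmod m$, so it too belongs to $\cg_j$. Proposition \ref{P:EasyMonomials2} then yields
\[
\ck_j f = C\,z_1^{\,j}\,z_2^{\,\ell(j)} = C\,z_1^{\,j}\,z_2^{\,-1-E_j}
\]
for a constant $C$. Inspecting the constant produced in the proof of Proposition \ref{P:EasyMonomials}, one sees it is a positive multiple of $\int_{H_{m/n}} r_1^{2j+1}r_2\,dr$, so $C\neq 0$. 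Checking this nonvanishing is the only point that requires any care, and it is immediate from positivity of the integrand.

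The last step is to locate the $L^p$ threshold of the output monomial. By the $p$-allowability criterion \eqref{E:p_allowable} with $\gamma=m/n$, the monomial $z_1^{\,j}z_2^{\,\ell(j)}$ lies in $L^p(\h_{m/n})$ exactly when
\[
-1-E_j > -\tfrac{n}{m}\,j - \tfrac{2}{p} - \tfrac{2n}{mp},
\]
and clearing denominators shows this is equivalent to $p < \frac{2m+2n}{m+mE_j-nj}$. Consequently, for every $p\geq \frac{2m+2n}{m+mE_j-nj}$ the image $\ck_j f = C z_1^{\,j}z_2^{\,\ell(j)}$ (with $C\neq 0$) is not in $L^p(\h_{m/n})$, whereas $f\in L^\infty(\h_{m/n})$. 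This exhibits a bounded input sent outside $L^p$, proving $\ck_j$ cannot map $L^\infty(\h_{m/n})$ into $L^p(\h_{m/n})$ for any such $p$. I do not anticipate a genuine obstacle: the argument hinges on the exact matching between the $p$-integrability exponent of the boundary monomial and the endpoint of Proposition \ref{P:boundednessSubprojections}, so the only thing to monitor is the bookkeeping with $E_j$ and $\ell(j)$ together with the nonvanishing of $C$.
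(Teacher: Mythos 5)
Your proof is correct and takes essentially the same route as the paper: the paper applies Proposition \ref{P:EasyMonomials2} to $f(z)=z_1^{\beta_1}\bar z_2^{-\beta_2}$ with $\beta_1=j+km$ and $\beta_2=\ell(\beta_1)=-1-nk-E_j$, and your function is exactly the $k=0$ member of that family, with the divergence computation packaged via \eqref{E:p_allowable} rather than redone in polar coordinates. Two bookkeeping points: $(j,\ell(j))$ lies \emph{on} the line \eqref{E:RationalThreshold} only when $n(j+1)\equiv 1\pmod m$ (i.e.\ $j=j_0$) — in general it lies on or above it, which is all that membership in $\cg_j$ requires — and clearing denominators in your last step uses the positivity of $m+mE_j-nj$, which follows from $E_j>\frac{n(j+1)-1}{m}-1$, giving $m+mE_j-nj>n-1\geq 0$ (the paper records this check explicitly), while your verification that $C\neq 0$ supplies a detail the paper leaves implicit.
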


\begin{proof}
Fix $j$, and take $\beta_1= j+km$ for some $k\in\Z^+\cup \{0\}$. Let $\beta_2=\ell(\beta_1)$, and note that \eqref{E:ell(j)} says that

\begin{equation*}
\beta_2 = -1 -nk -E_j <0.
\end{equation*}
Thus, $(\beta_1,\beta_2), (\beta_1, -\beta_2)\in\cg_j$. Let $f(z)=z_1^{\beta_1}\bar{z}_2^{-\beta_2}$; clearly $f\in L^\infty\left(\h_{m/n}\right)$. Proposition \ref{P:EasyMonomials2} says
that $\ck_j f= C z_1^{\beta_1} z_2^{\beta_2}$. 

Computing in polar coordinates

\begin{align*}
\int_{\h} \left|z_1^{\beta_1}z_2^{\beta_2}\right|^p\,dV(z) &= 4\pi^2 \int_{H} r_1^{p\beta_1+1}r_2^{p\beta_2+1}\,dV(z)\\
	&\approx \int_0^1 r_2^{p\beta_2+1} \int_0^{r_2^{n/m}} r_1^{p\beta_1+1}\,dr_1\,dr_2\\
	&\approx \int_0^1 r_2^{p\beta_2+1 + \frac{np\beta_1}{m} + \frac{2n}{m}} \, dr_2.
\end{align*}

This integral diverges when

\begin{equation}\label{E:conditionKj1}
p\beta_2+1 + \frac{np\beta_1}{m} + \frac{2n}{m} \le -1.
\end{equation}
Substituting $\beta_1= j+km$ and $\beta_2 = -1 -nk -E_j$, \eqref{E:conditionKj1} becomes

\begin{equation}\label{E:conditionKj2}
-p\left(m+mE_j-nj\right)\leq -2n-2m.
\end{equation}
However, since $E_j=\left\lfloor\frac{n(j+1)-1}n\right\rfloor$,

\begin{align*}
m+mE_j-nj &> m+m\left\{\frac{n(j+1)-1}m -1\right\} -nj \\ &=n-1 \geq 0,
\end{align*}
so \eqref{E:conditionKj2} is equivalent to $p\geq \frac{2m+2n}{m+mE_j-nj}$, which completes the proof.
\end{proof}

\begin{proposition}\label{L:NonLpBoundednessRational}
For $p \ge \frac{2m+2n}{m+n-1}$, $\mathbf{B}_{m/n}$ fails to map $L^{\infty}(\h_{m/n})$ to $L^p(\h_{m/n})$.
\end{proposition}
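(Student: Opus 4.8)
The plan is to reduce the statement about the full projection $\mathbf{B}_{m/n}$ to the statement about a single sub-Bergman projection, which has already been handled in Proposition \ref{P:NonLpKj}. The key structural fact is the finite orthogonal decomposition \eqref{E:bergman_decomp}, which gives $\mathbf{B}_{m/n} = \sum_{j=0}^{m-1}\ck_j$, together with the sharp exponent analysis carried out via $E_{j_0}$ in \eqref{E:specialE}. Recall that the target threshold $\frac{2m+2n}{m+n-1}$ is precisely the right endpoint produced by Proposition \ref{P:NonLpKj} for the distinguished index $j_0 = x-1$, where $x$ is the unique residue in $\{0,\dots,m-1\}$ with $nx\equiv 1\pmod m$, since for this index one has $mE_{j_0}-nj_0 = n-1$.

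First I would invoke Proposition \ref{P:NonLpKj} with $j=j_0$: it produces an explicit test function $f\in L^\infty(\h_{m/n})$, namely $f(z)=z_1^{\beta_1}\bar z_2^{-\beta_2}$ with $\beta_1=j_0+km$ and $\beta_2=\ell(\beta_1)=-1-nk-E_{j_0}$, for which $\ck_{j_0}f = C\, z_1^{\beta_1}z_2^{\beta_2}\notin L^p(\h_{m/n})$ for every $p\geq \frac{2m+2n}{m+mE_{j_0}-nj_0} = \frac{2m+2n}{m+n-1}$. The point is that this same $f$ serves as a test function for the \emph{full} projection. Since $(\beta_1,\beta_2)$ and $(\beta_1,-\beta_2)$ both lie in $\cg_{j_0}$, Proposition \ref{P:EasyMonomials2} gives $\ck_l f = 0$ for all $l\neq j_0$, so the finite sum collapses:
\begin{equation*}
\mathbf{B}_{m/n} f = \sum_{l=0}^{m-1}\ck_l f = \ck_{j_0} f = C\, z_1^{\beta_1}z_2^{\beta_2}.
\end{equation*}
Thus $\mathbf{B}_{m/n}f$ equals the very monomial that Proposition \ref{P:NonLpKj} shows is not in $L^p(\h_{m/n})$ for $p\geq \frac{2m+2n}{m+n-1}$, and therefore $\mathbf{B}_{m/n}$ fails to map $L^\infty$ into $L^p$ in that range.

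The only genuine content beyond assembling these ingredients is verifying that the constant $C$ in Proposition \ref{P:EasyMonomials2} is nonzero, so that no unexpected cancellation rescues integrability; this follows because $C$ is (up to the positive norming constants $c_{\gamma,\alpha}^{-2}$) the positive integral $\int_H r_1^{2\beta_1+1}r_2^{1}\,dr$ appearing in the proof of Proposition \ref{P:EasyMonomials}, which is strictly positive. I expect the main subtlety — more bookkeeping than obstacle — to be confirming that $j_0$ is the index realizing the \emph{smallest} (hence binding) right endpoint among all the $\ck_j$, i.e. that the divergence at $p=\frac{2m+2n}{m+n-1}$ is genuinely attained and not merely an upper bound; but this is exactly the content of \eqref{E:specialE} and the identity $mE_{j_0}-nj_0=n-1$ established just before Corollary \ref{C:1halfMain}, so the argument is complete.
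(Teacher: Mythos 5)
Your proposal is correct and is essentially the paper's own proof: both single out the distinguished residue $j_0$ (from $nx\equiv 1\pmod m$, with $mE_{j_0}-nj_0=n-1$), feed the bounded monomial $z_1^{j_0}\bar z_2^{-\ell(j_0)}$ into Proposition \ref{P:NonLpKj}, and use Proposition \ref{P:EasyMonomials2} with the decomposition \eqref{E:bergman_decomp} to collapse $\mathbf{B}_{m/n}f$ to $\ck_{j_0}f$. Your explicit check that the constant $C$ is strictly positive is a detail the paper leaves implicit, a welcome addition rather than a different method.
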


\begin{proof} 
Since $\gcd(m,n)=1$, the equation $nx\equiv 1\pmod m$ has a unique solution $x\in \{0,\dots m-1\}$. By the same modular arithmetic that led to \eqref{E:specialE},
there exists a unique multi-index $\left(j_0,\ell(j_0)\right)$ satisfying 

\begin{align}\label{E:specialBeta}
0 &\le j_0 \le m-1\notag \\ 
\ell(j_0) &= -\frac {n}{m}j_0 -1 + \frac{1-n}{m},
\end{align}
i.e., the lattice point $\left(j_0,\ell(j_0)\right)$ lies on the boundary line determining $\ca^2_{m/n}$. Proposition \ref{P:NonLpKj} says that $\ck_{j_0}$ does not map the bounded function
$g(z)=z_1^{j_0}\bar z_2^{-\ell(j_0)}$ to $L^p(\h_{m/n})$ for $p \ge \frac{2m+2n}{m+n-1}$. On the other hand, Proposition \ref{P:EasyMonomials2} says that $\ck_j(g)=0$ for all $j\neq j_0$. Thus,
\eqref{E:bergman_decomp} gives the claimed result.

\end{proof}

To obtain $L^p$ non-boundedness for $p<2$, recall an elementary consequence of the self-adjointness  of the Bergman projection (in the ordinary $L^2$ inner product):

\begin{lemma}\label{L:symmetricLp}  Let $\Omega$ be a domain and let $p > 1$.  If $\mathbf{B}$ maps $L^p(\Omega)$ to $A^p(\Omega)$ boundedly, then it also maps $L^q(\Omega)$ to $A^q(\Omega)$ boundedly, where $\frac 1p+ \frac 1q =1$.
\end{lemma}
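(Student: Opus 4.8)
The plan is to exploit the self-adjointness of $\mathbf{B}$ together with the duality $\left(L^q(\Omega)\right)^*\cong L^p(\Omega)$ realized by the pairing \eqref{D:innerProduct}. For conjugate exponents $\frac 1p+\frac 1q=1$ one has, for every $h\in L^q(\Omega)$,
\[
\|h\|_q=\sup\left\{\left|\langle h,g\rangle\right|\,:\,g\in L^p(\Omega),\ \|g\|_p\le 1\right\}.
\]
Thus, to bound $\|\mathbf{B} f\|_q$ it suffices to control $\left|\langle \mathbf{B} f,g\rangle\right|$ uniformly over all such test functions $g$, and the assumed $L^p$ bound enters precisely when the self-adjointness identity lets me move $\mathbf{B}$ off of $f$ and onto $g$.

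First I would restrict attention to a dense subclass on which all manipulations are transparent, for instance the bounded, compactly supported functions, which are dense in every $L^r(\Omega)$, $1<r<\infty$, and which lie in $L^2(\Omega)$. For $f$ and $g$ in this class, self-adjointness of $\mathbf{B}$ with respect to \eqref{D:innerProduct} gives $\langle \mathbf{B} f,g\rangle=\langle f,\mathbf{B} g\rangle$. Applying H\"older's inequality and then the hypothesis $\|\mathbf{B} g\|_p\le C\|g\|_p$ yields
\[
\left|\langle \mathbf{B} f,g\rangle\right|=\left|\langle f,\mathbf{B} g\rangle\right|\le \|f\|_q\,\|\mathbf{B} g\|_p\le C\,\|f\|_q\,\|g\|_p.
\]
Taking the supremum over $\|g\|_p\le 1$ produces $\|\mathbf{B} f\|_q\le C\|f\|_q$ for all $f$ in the dense subclass. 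A routine continuity argument then extends $\mathbf{B}$ to a bounded operator on all of $L^q(\Omega)$, identifies this extension with the integral operator $\mathbf{B}$, and shows its range lies in $A^q(\Omega)$ (holomorphicity survives the $L^q$-limit via local uniform convergence on compact subsets).

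The step requiring the most care is that self-adjointness is an $L^2$ statement, so it cannot be applied directly to arbitrary elements of $L^p$ or $L^q$. Confining the identity $\langle \mathbf{B} f,g\rangle=\langle f,\mathbf{B} g\rangle$ to a class that is simultaneously contained in $L^2(\Omega)$ and dense in both $L^p(\Omega)$ and $L^q(\Omega)$ is exactly what makes the duality computation legitimate; once the uniform estimate is in hand on that class, the density extension and the verification that the limit is again given by integration against $\B_\Omega(z,w)$ are standard and carry no further difficulty.
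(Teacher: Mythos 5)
Your proof is correct and follows essentially the same route as the paper's: express $\|\mathbf{B}f\|_q$ by duality against the unit ball of $L^p(\Omega)$, use self-adjointness to move $\mathbf{B}$ onto the test function, and conclude with H\"older's inequality and the assumed $L^p$ bound. The only difference is that you additionally justify the identity $\langle \mathbf{B}f, g\rangle = \langle f, \mathbf{B}g\rangle$ by first working on a dense subclass contained in $L^2(\Omega)$ and then extending by continuity --- a legitimate point of care that the paper's one-line computation passes over silently, but a refinement of the same argument rather than a different one.
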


\begin{proof}
Let $f \in L^q(\Omega)$.  Then

\begin{align*}
\left\|\mathbf{B}f\right\|_q &= \sup_{\norm{g}_p = 1} \left|\left\langle \mathbf{B}f, g \right\rangle\right| = \sup_{\norm{g}_p = 1} \left|\left\langle f, \mathbf{B}g \right\rangle\right| \\
	&\leq \sup_{\norm{g}_p = 1} \left( \norm{f}_q \norm{\mathbf{B}g}_p \right)
	\lesssim \norm{f}_q.
\end{align*}

\end{proof}

Proposition \ref{L:NonLpBoundednessRational} and Lemma \ref{L:symmetricLp} give the other half of Theorem \ref{T:MainLpRational}:

\begin{corollary}
$\mathbf{B}_{m/n}$ is not a bounded operator on $L^p(\h_{m/n})$ for  $p\notin(\frac{2m+2n}{m+n+1},\frac{2m+2n}{m+n-1})$.
\end{corollary}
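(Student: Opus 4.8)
The plan is to assemble the corollary from two ingredients already established: the non-boundedness for large $p$ in Proposition \ref{L:NonLpBoundednessRational}, and the duality symmetry of Lemma \ref{L:symmetricLp}. Within $(1,\infty)$, the complement of the open interval $\left(\frac{2m+2n}{m+n+1},\frac{2m+2n}{m+n-1}\right)$ splits into an upper tail $p\geq \frac{2m+2n}{m+n-1}$ and a lower tail $1<p\leq \frac{2m+2n}{m+n+1}$, and I would treat these two pieces separately.

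For the upper tail, I would invoke Proposition \ref{L:NonLpBoundednessRational}, whose proof produces a specific bounded function $g$ with $\mathbf{B}_{m/n}g\notin L^p(\h_{m/n})$ for every $p\geq \frac{2m+2n}{m+n-1}$. Since $\h_{m/n}$ is a bounded domain, $L^\infty(\h_{m/n})\subset L^p(\h_{m/n})$, so this same $g$ lies in $L^p(\h_{m/n})$. Hence $\mathbf{B}_{m/n}$ fails to send $L^p$ into $L^p$, and is therefore certainly not bounded on $L^p$. This settles the entire upper tail with no new computation.

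For the lower tail, the essential observation is that the two endpoints are H\"older conjugate. A short check gives $\frac 1p+\frac 1q=1$ when $p=\frac{2m+2n}{m+n-1}$ and $q=\frac{2m+2n}{m+n+1}$, and more generally $1<p\leq \frac{2m+2n}{m+n+1}$ holds precisely when the conjugate exponent satisfies $q\geq \frac{2m+2n}{m+n-1}$. I would then argue by contradiction: if $\mathbf{B}_{m/n}$ were bounded on $L^p(\h_{m/n})$ for some such $p$, then Lemma \ref{L:symmetricLp}, which rests only on the $L^2$ self-adjointness of the projection, would force boundedness on $L^q(\h_{m/n})$ as well, contradicting the upper-tail conclusion just established.

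I do not expect any serious obstacle here; the corollary is a bookkeeping consequence of the preceding propositions together with Lemma \ref{L:symmetricLp}. The only points requiring a moment's care are the conjugacy verification for the endpoints and the elementary remark that boundedness of $\h_{m/n}$ upgrades the $L^\infty$ statement of Proposition \ref{L:NonLpBoundednessRational} to the desired failure of $L^p\to L^p$ boundedness.
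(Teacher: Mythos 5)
Your proposal is correct and follows essentially the same route as the paper, which derives the corollary directly from Proposition \ref{L:NonLpBoundednessRational} together with the duality statement of Lemma \ref{L:symmetricLp}. Your added bookkeeping --- the conjugacy check $\frac{1}{p}+\frac{1}{q}=1$ for $p=\frac{2m+2n}{m+n-1}$, $q=\frac{2m+2n}{m+n+1}$, and the remark that $L^\infty(\h_{m/n})\subset L^p(\h_{m/n})$ on the bounded domain upgrades the $L^\infty\to L^p$ failure to failure of $L^p$ boundedness --- merely makes explicit what the paper leaves implicit.
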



\section{The irrational case: degenerate $L^p$ mapping}\label{S:Irrational}

The plausibility of Theorem \ref{T:MainLpIrrational} is already suggested by Theorem \ref{T:MainLpRational}. If $\gamma\notin\Q$, we may approximate $\gamma$ by rationals $\frac mn$ with $m+n$ tending to infinity  (keeping $\gcd(m,n)=1$).  However, Theorem \ref{T:MainLpRational} shows both that  the interval of $L^p$ boundedness of $\mathbf{B}_{m/n}$ depends on $m+n$ and that this interval shrinks to the point 2 as $m+n\to\infty$. 

To actually prove Theorem \ref{T:MainLpIrrational}, a more quantified version of this argument is necessary. For this, we use a classical theorem of Dirichlet on diophantine approximation. 
This result is proved, for instance, in \cite{HardyWrightbook} as Theorem 187 on page 158. 

\begin{proposition}[Dirichlet]\label{T:DirichletApprox}
If $\gamma$ is irrational, there exists a sequence of rational numbers $\left\{\frac{m_j}{n_j}\right\}$, with $\frac{m_j}{n_j} \to \gamma$, such that

\begin{equation*}
\left|\frac{n_j}{m_j}-\frac{1}{\gamma} \right|<\frac{1}{m_j^2}.
\end{equation*}
\end{proposition}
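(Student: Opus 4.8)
The plan is to recognize this as the classical Dirichlet approximation theorem applied to the irrational number $1/\gamma$. Writing $\beta = 1/\gamma$ (irrational, since $\gamma \notin \Q$), the claimed inequality $\left|n_j/m_j - 1/\gamma\right| < 1/m_j^2$ is precisely the assertion that $\beta$ admits infinitely many rational approximations $n_j/m_j$, with denominator $m_j$, obeying the quadratic error bound $\left|\beta - n_j/m_j\right| < 1/m_j^2$; the convergence $m_j/n_j \to \gamma$ then follows from $n_j/m_j \to \beta = 1/\gamma$. So it suffices to establish the one-sided Dirichlet statement for $\beta$ and translate the roles of numerator and denominator.

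First I would prove the finite-level statement by the pigeonhole principle. Fix an integer $Q \ge 1$ and consider the $Q+1$ fractional parts $\{k\beta\}$, $k = 0,1,\dots,Q$, all lying in $[0,1)$. Partitioning $[0,1)$ into the $Q$ half-open subintervals $[i/Q,(i+1)/Q)$, $0 \le i \le Q-1$, and applying pigeonhole, two of these fractional parts, say $\{k_1\beta\}$ and $\{k_2\beta\}$ with $0 \le k_1 < k_2 \le Q$, must lie in the same subinterval. Subtracting and setting $q = k_2 - k_1 \in \{1,\dots,Q\}$ and $p = \lfloor k_2\beta \rfloor - \lfloor k_1\beta \rfloor$ yields $\left|q\beta - p\right| < 1/Q$, hence $\left|\beta - p/q\right| < 1/(qQ) \le 1/q^2$.

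The remaining step is to upgrade this to infinitely many distinct fractions and extract the sequence. Because $\beta$ is irrational, $q\beta - p \ne 0$ for every pair produced above. If only finitely many distinct fractions $p/q$ occurred as $Q$ ranges over $\Z^+$, then $\delta := \min \left|q\beta - p\right|$ over this finite set would be strictly positive; choosing $Q > 1/\delta$ would produce a pair with $\left|q\beta - p\right| < 1/Q < \delta$, forcing a fraction outside the set, a contradiction. Thus infinitely many distinct rationals $p/q$ satisfy $\left|\beta - p/q\right| < 1/q^2$, and since each fixed denominator admits only finitely many such $p$, the denominators are necessarily unbounded. Relabelling a subsequence with denominators $m_j \to \infty$ as $n_j/m_j$ gives exactly the asserted inequality, and $n_j/m_j \to 1/\gamma$ yields $m_j/n_j \to \gamma$. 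Should coprimality be desired in the application, one reduces each $n_j/m_j$ to lowest terms; this only decreases the denominator, so the bound $1/m_j^2$ is preserved, and distinct rational values retain distinct reduced forms. The only genuinely delicate point is this final extraction — using irrationality to guarantee infinitely many distinct approximants whose denominators tend to infinity — everything else being the standard pigeonhole count.
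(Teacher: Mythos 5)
Your proof is correct, but there is nothing internal to compare it against: the paper does not prove this proposition at all, instead quoting it as a classical result with a pointer to Hardy and Wright (Theorem 187, p.~158 of \cite{HardyWrightbook}). Your pigeonhole argument is the standard textbook proof of Dirichlet's theorem, so in effect you have supplied the proof the paper outsources. The two genuinely non-boilerplate points in adapting the textbook statement to the proposition as phrased are both handled correctly: (a) you apply Dirichlet to $\beta = 1/\gamma$, so that $m_j$ --- the \emph{numerator} of the approximation $m_j/n_j \to \gamma$ --- plays the role of the \emph{denominator} in the quadratic error bound, which is exactly the form used in Section \ref{S:Irrational}; and (b) you use irrationality to extract infinitely many distinct approximants with $m_j \to \infty$, which is what the application actually needs (the proof of Theorem \ref{T:MainLpIrrational} must take $j$ large). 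Two small points worth tightening: in the finiteness contradiction, the quantity $\left|q\beta - p\right|$ depends on the representation $(p,q)$ and not only on the fraction $p/q$, so you should define $\delta$ as the minimum over the \emph{reduced} representations; this suffices, since any representation $(kp_0, kq_0)$ of a reduced fraction $p_0/q_0$ satisfies $\left|q\beta - p\right| = k\left|q_0\beta - p_0\right| \ge \left|q_0\beta - p_0\right|$. And since $1/\gamma > 0$, the bound $\left|\beta - n_j/m_j\right| < 1/m_j^2$ forces $n_j \ge 1$ once $j$ is large, so $m_j/n_j$ is well defined and the passage from $n_j/m_j \to 1/\gamma$ to $m_j/n_j \to \gamma$ is legitimate; your reduction to lowest terms also correctly preserves the bound because reducing only decreases the denominator.
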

\medskip

\begin{proof}[Proof of Theorem \ref{T:MainLpIrrational}]
Fix $p > 2$.  We will exhibit an $f \in L^{\infty}(\h_{\gamma})$ such that $\mathbf{B}_{\gamma}(f) \notin L^p(\h_{\gamma}).$  

Let $\left\{\frac{m_j}{n_j}\right\}$ be a sequence of rational numbers given by Proposition \ref{T:DirichletApprox}. Temporarily fix the index $j$.  From \eqref{E:specialBeta}, there exists a unique $\beta = (\beta_1,\beta_2) \in \ca^2_{m_j/n_j}$ with $0 \le \beta_1 \le m_j-1$ and such that

\begin{equation}\label{E:BorderlineMultiIndex}
\beta_2 = \frac{1- n_j\beta_1 -n_j - m_j}{m_j} \in \Z.
\end{equation}
Assume for the moment that this multi-index $\beta \in \ca^2_{\gamma}$.  We will shortly show this is always the case. 

Let $f_j(z) := z_1^{\beta_1}/\bar{z}_2^{\beta_2}$; as $\beta_2 < 0$, $f_j \in L^{\infty}(\h_{\gamma})$. Since we are assuming $\beta \in \ca^2_{\gamma}$,  Proposition \ref{P:EasyMonomials} implies $\mathbf{B}_{\gamma}(f_j)(z) \approx z_1^{\beta_1}z_2^{\beta_2}$.  It follows that

\begin{align*}
\norm{\mathbf{B}_{\gamma}(f_j)}_{L^p(\h_{\gamma})}^p &\approx \int_{\h_{\gamma}}|z_1^{\beta_1p}z_2^{\beta_2p}|\,dV(z) = 4\pi^2\int_{H_{\gamma}}r_1^{\beta_1p+1}r_2^{\beta_2p+1}\,dr\\
	&\approx \int_0^1 r_2^{\beta_2p+1}\int_0^{r_2^{1/{\gamma}}}r_1^{\beta_1p+1}\,dr_1\,dr_2\\
	&\approx \int_0^1 r_2^{\beta_2p+1+\frac{\beta_1p}{\gamma}+\frac{2}{\gamma}}\,dr_2.
\end{align*}
This diverges if the exponent is $ \le -1$.  Substituting the expression for $\beta_2$ in \eqref{E:BorderlineMultiIndex} and rearranging terms, this happens exactly when

\begin{equation}\label{E:IrrationalExponentThreshold}
p\left(1+\frac{n_j-1}{m_j}+\beta_1\left(\frac {n_j}{m_j} -\frac{1}{\gamma}\right)\right) \ge 2+\frac{2}{\gamma}.
\end{equation}

Consider the left hand side of \eqref{E:IrrationalExponentThreshold}.  Since $0 \le \beta_1 \le m_j-1$, 
\begin{equation*}
\beta_1\left|\frac{n_j}{m_j} -\frac 1\gamma\right| < \frac 1{m_j},
\end{equation*}
by Proposition \ref{T:DirichletApprox}. Thus

\begin{align*}
p\left(1+\frac{n_j-1}{m_j}+\beta_1\left(\frac {n_j}{m_j} -\frac{1}{\gamma}\right)\right) &\ge p\left(1+\frac {n_j-1}{m_j} - \beta_1 \left| \frac {n_j}{m_j} - \frac{1}{\gamma}\right|\right)\\
	&> p\left(1+\frac{n_j-2}{m_j} \right).
\end{align*}
However since $p>2$, we can always choose $j$ large enough so that 

\begin{equation*}
p\left(1+\frac{n_j-2}{m_j} \right) > 2+\frac{2}{\gamma}.
\end{equation*}
Thus, \eqref{E:IrrationalExponentThreshold} is satisfied for such $j$, which shows $\mathbf{B}_{\gamma}(f_j) \notin L^p(\h_{\gamma})$.  

We now show that the unique multi-index $\beta = (\beta_1,\beta_2) \in \ca^2_{m_j/n_j}$ with $0 \le \beta_1 \le m_j-1$ and $\beta_2$ given by \eqref{E:BorderlineMultiIndex} is necessarily in $\ca^2_{\gamma}$.  We'll leave off the subscript $j$ in what follows.  

Again, the rational approximation $\left|\frac{n}{m}-\frac{1}{\gamma} \right|<\frac{1}{m^2}$ is essential.  If $\frac {m}{n} > \gamma$, then $A^2(\h_{m/n}) \subset A^2(\h_{\gamma})$ so automatically $\beta\in\ca^2_\gamma$.  Suppose instead that $\frac mn < \gamma$.  Lemma \ref{L:gammaAllowableMultiIndex} implies that $\beta \in \ca^2_{\gamma}$ if and only if $\beta_1 \ge 0$ and the lattice point corresponding to $\beta$ lies {\it strictly above} the line

\begin{align*}
g(\beta_1) := -\frac {\beta_1}{\gamma} - \frac{1}{\gamma} -1.
\end{align*}
But since $\frac mn \in \Q^+$, a multi-index $\beta \in \ca^2_{m/n}$ if and only if both $\beta_1 \ge 0$ and the lattice point corresponding to $\beta$ lies {\em on or above} the line
\begin{equation*}
h(\beta_1) := -\frac nm \beta_1 + \frac{1-n}{m} -1.
\end{equation*}
Now for $0 \le \beta_1 \le m-1$,
\begin{align*}
h(\beta_1) - g(\beta_1) &= \frac 1m - (\beta_1+1)\left(\frac nm - \frac{1}{\gamma} \right)\\ &\ge \frac 1m - m \left(\frac nm - \frac{1}{\gamma} \right)\\ & >0.
\end{align*}
From this it follows that $\beta = (\beta_1, \beta_2) \in \ca^2_{\gamma}$. 

Since $p>2$ was arbitrary, the above shows that $\mathbf{B}_\gamma$ is not $L^p$ bounded for any $p>2$.  Lemma \ref{L:symmetricLp} now shows $\mathbf{B}_\gamma$ is not $L^p$ bounded for any $1<p<2$, which completes the proof.
\end{proof}

\bigskip
\bigskip

\bibliographystyle{acm}
\bibliography{EdhMcN16}

\end{document}